\documentclass[11pt]{article}
\usepackage[top=2.54cm, bottom=2.54cm, left=2.70cm, right=2.70cm]{geometry}
\usepackage[tbtags]{amsmath}
\usepackage{amssymb}
\usepackage{amsthm}
\usepackage{mathrsfs}
\usepackage{fancyhdr}
\usepackage{float}
\usepackage{graphicx}
\usepackage{tikz}
\usepackage{tkz-graph}
\usepackage{tkz-berge}
\usetikzlibrary{decorations.pathreplacing}
\usepackage[numbers,sort&compress]{natbib}
\usepackage{setspace}

\allowdisplaybreaks[4]
\newtheorem{theorem}{\bf Theorem}[section]

\newtheorem{problem}[theorem]{Problem}

\newtheorem{conj}[theorem]{Conjecture}

\newtheorem{claim}{\indent Claim}[]

\newcommand{\smalldoublearrow}[1]{\overset{\text{\scalebox{1.0}{$\leftrightarrow$}}}{#1}}

\newcommand{\smallarrow}[1]{\overset{\text{\scalebox{1.0}{$\rightarrow$}}}{#1}}

\DeclareMathOperator{\init}{init}
\DeclareMathOperator{\term}{term}
\DeclareMathOperator{\dist}{dist}

\begin{document}
	
	\begin{spacing}{1.1}
		
		\title{An improvement bound on a problem of Picasarri-Arrieta and Rambaud}
		
		\author{Bin Chen$^a$, \, Xinmin Hou$^{b,c}$\thanks {Email address: xmhou@ustc.edu.cn(X. Hou)}, \, Yue Ma$^d$, \,Zhi Yin$^b$, \,Xinyu Zhou$^b$\\
			\small$^a$ School of Mathematics and Statistics, Fuzhou University, Fujian, China\\
			\small$^b$ School of Mathematical Sciences,\\
			\small University of Science and Technology of China, Hefei 230026, Anhui, China\\
			\small$^c$ Hefei National Laboratory,\\
\small University of Science and Technology of China, Hefei 230088, Anhui, China\\
			\small$^d$ School of Mathematics and Statistics,\\
			\small Nanjing University of Science and Technology, Nanjing 210094, Jiangsu, China\\}
		\date{}
		
		\maketitle
		
		\noindent{\bf Abstract}\,: Let $k$ and $\ell$ be positive integers. A cycle with two blocks $C(k,\ell)$
		is a digraph consisting of two internally vertex disjoint directed paths of lengths $k$ and $\ell$ with the same initial vertex and terminal vertex. Picasarri-Arrieta and Rambaud (European J. Combin., 2024) proved that for any $k\geq 2$, every digraph of minimum out-degree at least two and girth at least $8k-6$ contains a subdivision of $C(k,k)$. They also construct a family of digraphs showing that the girth cannot be reduced to $k-1$, and posed the problem of determining  the minimum girth such that every digraph of minimum out-degree at least two contains a subdivision of $C(k,k)$. In this paper, we improve the lower bound on the girth from  $8k-6$ to $4k+2$,  
        and construct a family of digraphs in which every member has minimum out-degree two and girth $k$ but contains no subdivision of $C(k,k)$. Thus our results show that the  girth in question lies between $k+1$ and $4k+2$.

		{\bf AMS}\,: 05C20; 05C38.
		
		{\bf Keywords}\,: Cycle with two blocks, subdivision, minimum out-degree, girth.
		
		\section{Introduction}
		
		\noindent

		Given a graph $F$, a subdivision of $F$ is a graph obtained from $F$ by replacing its edges by internally vertex disjoint paths. Let $K_{k}$ be the complete graph on $k$ vertices. Mader \cite{Mader67} 
        proved that for any positive integer $k$ there exists an integer $g(k)$ such that every graph of average degree at least $g(k)$ contains a subdivision of $K_{k}$. The rate of the growth of $g(k)$ was independently determined by Bollob\'{a}s and Thomason \cite{BoT} and by Koml\'{o}s and Szemer\'{e}di \cite{KS}. Specifically, they proved that there exists a constant $C>0$ such that every graph with average degree $Ck^{2}$ contais a subdivision of $K_{k}$, and that this bound is best possible up to the constant factor $C$.

		There is a natural analogue of subdivisions in digraphs. For a digraph $F$, a subdivision of $F$ is a digraph obtained by replacing every arc of $F$ by a directed path from its tail to head such that all subdivision directed paths are pairwise internally vertex disjoint. Researchers are interested in finding subdivisions of specific digraphs with respect to several digraph parameters. It is convenient to use the following notation from \cite{Aboulker}. Given a digraph parameter $\gamma$, a digraph $F$ is said to be \emph{$\gamma$-maderian} if there exists a least integer mader$_{\gamma}(F)$ such that
		every digraph $D$ with $\gamma(D)$$\geq$ mader$_{\gamma}(F)$ contains a subdivision of $F$ as a subdigraph. We call mader$_{\gamma}(F)$ the \emph{Mader number} of $F$ with respect to the digraph parameter $\gamma$.
		
		In digraph theory, one of the most fundamental and widely studied parameters is the minimum out-degree. Let us introduce some notation before presenting the classical results. An \emph{orientation} of a graph $G$, denoted by $\smallarrow{G}$, is an assignment of exactly one direction to each of its edges. The \emph{biorientation} of a graph $G$, denoted by $\smalldoublearrow{G}$, is the digraph formed by replacing each of its edges by two arcs with opposite directions. By $k\smalldoublearrow{K}_{2}$ we mean a digraph consisting of $k$ vertex disjoint $\smalldoublearrow{K}_{2}$. A celebrated conjecture of Bermond and Thomassen \cite{BT} is that for any positive integer $k$, mader$_{\delta^+}(k\smalldoublearrow{K}_{2})=2k-1$. This conjecture is trivially true when $k=1$, and it has been confirmed to be true for the cases $k=2$ by Thomassen \cite{TH} and $k=3$ by Lichiardopol, P\'{o}r and Sereni \cite{LPS}. For general $k\geq 4$, there is no exact result. Thomassen \cite{TH} initially proved that $k\smalldoublearrow{K}_{2}$ is $\delta^{+}$-maderian. Specifically, he showed that mader$_{\delta^+}(k\smalldoublearrow{K}_{2})\leq (k+1)!$. Subsequently, Alon \cite{Alon} greatly reduced this estimate to mader$_{\delta^+}(k\smalldoublearrow{K}_{2})\leq 64k$ by combining probabilistic method and structural analysis. The above bound was further improved to mader$_{\delta^+}(k\smalldoublearrow{K}_{2})\leq 18k$ by Buci\'{c} \cite{B}. It was proved by Gishboliner, Steiner and Szab\'{o} \cite{Gishboliner22_2} that any digraph with minimum out-degree greater than one contains a subdivision of $\smalldoublearrow{K}_{3}$ minus an arc. Let $S_{n}$ be a star on $n$ vertices with a central vertex and $n-1$ leaves. Thomassen in \cite{Thomassen} determined that mader$_{\delta^{+}}(\smalldoublearrow{S_{3}})=2$.

		Whereas, there are also many digraphs that are not $\delta^{+}$-maderian. Thomassen \cite{Th85} constructed a family of digraphs with  arbitrarily large minimum out-degree  but no directed cycle of even length. This implies that  even digraphs (those for which every subdivisions contains a directed cycle of even length, with $\smalldoublearrow{K}_{3}$ being the smallest example) are not $\delta^{+}$-maderian. He also constructed another family of digraphs with arbitrarily high minimum out-degree in which every digraph has no subdivision of $\smalldoublearrow{S_{4}}$, that is, $\smalldoublearrow{S_{k}}$ is not $\delta^{+}$-maderian for any $k\geq 4$.

		A digraph is \emph{acyclic} if there exists no directed cycle. Surprisingly, no acyclic digraph is known that fails the property of $\delta^{+}$-maderian. In \cite{Mader85}, Mader put forward the following famous conjecture.

		\begin{conj}\label{main conjecture}
			
			Every acyclic digraph is $\delta^{+}$-maderian.
			
		\end{conj}

		A \emph{tournament} is a digraph in which there is exactly one arc between every pair of distinct vertices. A \emph{transitive tournament} is a tournament without directed cycles. Let $TT_{n}$ be a transitive tournament on $n$ vertices. To verify Conjecture~\ref{main conjecture}, it suffices to show that $TT_{n}$ is $\delta^{+}$-maderian for any given $n$. Evidently, mader$_{\delta^{+}}(TT_{n})=1$ for $n=1,2$ and mader$_{\delta^{+}}(TT_{3})=2$. In \cite{Ma96}, Mader further determined that mader$_{\delta^{+}}(TT_{4})=3$. However, there is no result about $TT_{n}$ when $n\geq 5$, we even do not know whether mader$_{\delta^{+}}(TT_{n})$ exists or not.

		We next present some progress regarding Conjecture~\ref{main conjecture}. An \emph{out-arborescence}\;(resp., \emph{in-arborescence}) is an oriented tree with a vertex, called its \emph{root}, such that all arcs are directed away from (resp., towards) the root. Observe that every out-arborescences is $\delta^{+}$-maderian. In \cite{Aboulker}, Aboulker, Cohen, Havet, Lochet, Moura and Thomass\'{e} proved that any in-arborescences is also $\delta^{+}$-maderian. In the same paper, they further deduced that every oriented path is $\delta^{+}$-maderian. Indeed, they determined that mader$_{\delta^{+}}(F)=n-1$ when $F$ is an oriented path on $n$ vertices.

		In 1995, Mader \cite{Ma95} proved that for all $t\in \mathbb{N}$, the digraphs
		consisting of $t$ internally disjoint directed paths with the same initial vertex and the terminal vertex are $\delta^{+}$-maderian. But we do not know how long these directed paths can be. It is natural and interesting to investigate this problem, and the first nontrivial case is $t=2$ due to Aboulker et al. \cite{Aboulker}.
		Let $C(k,\ell)$ be an oriented cycle formed by two internally vertex disjoint directed paths with the same initial vertex and terminal vertex of lengths $k$ and $\ell$, respectively, where $k$ and $\ell$ are positive integers. 
		 Note that the complete digraph $\smalldoublearrow{K}_{k+\ell-1}$ has minimum out-degree $k+\ell-2$ and contains no subdivision of $C(k,\ell)$, as $|V(C(k,\ell))|=k+\ell$. Consequently, mader$_{\delta^{+}}(C(k,\ell))\geq k+\ell-1$. Also in \cite{Aboulker}, Aboulker et al. showed that mader$_{\delta^{+}}(C(k,\ell))\leq 2k+2\ell-1$, and they further raised a problem to determine mader$_{\delta^{+}}(C(k,\ell))$.
		
		\begin{problem}[\cite{Aboulker}]\label{kl}
			For positive integers $k$ and $\ell$, what is the value of mader$_{\delta^{+}}(C(k,\ell))$?
		\end{problem}
		
		In 2022, Gishboliner, Steiner and Szab\'{o} \cite{Gishboliner22_2} improved the upper bound by showing that mader$_{\delta^{+}}(C(k,\ell))\leq k+3\ell-1$, where $k\geq \ell$. Subsequently, Liu and Yoo \cite{Liu} determined that mader$_{\delta^{+}}(C(k,\ell))=k+\ell-1$ for any positive integers $k$ and $\ell$. More generally, Gishboliner et al. \cite{Gishboliner22_2} proved that any oriented cycle is $\delta^{+}$-maderian.
		
		The \emph{girth} of $D$, denoted by $g(D)$, is the length of a shortest directed cycle in $D$. In \cite{Picasarri-Arrieta}, Picasarri-Arrieta and Rambaud \cite{Picasarri-Arrieta} considered the subdivisions of $C(k,k)$ under the condition of large girth. For any integer $g$, we denote mader$_{\delta^+}^{(g)}(F)$ to be the least integer $k$ such that every digraph $D$ satisfying $\delta^+(D) \geq k$ and $g(D) \geq g$ contains a subdivision of $F$. Picasarri-Arrieta et al. presented the following result.
		
		\begin{theorem}[\cite{Picasarri-Arrieta}]\label{largegirth}
			For any positive integer $k$, mader$_{\delta^+}^{(8k-6)}(C(k,k))=2$.
		\end{theorem}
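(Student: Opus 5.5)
The statement has two halves. The lower bound $\mathrm{mader}^{(8k-6)}_{\delta^+}(C(k,k))\ge 2$ is immediate: a directed cycle of length $8k-6$ has minimum out-degree one and girth $8k-6$, but it contains no two internally disjoint paths with common ends. So the work is to show that every digraph $D$ with $\delta^+(D)\ge 2$ and $g(D)\ge 8k-6$ contains a subdivision of $C(k,k)$. Such a subdivision is just two internally disjoint directed $(u,v)$-paths, each of length at least $k$. My plan is to take a minimal counterexample and restrict to a terminal strong component. Deleting vertices outside a sink component keeps $\delta^+\ge 2$ inside it, since out-arcs of a sink component stay inside it. So I may assume $D$ is strongly connected.

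The core construction starts from a shortest directed cycle $C$, of length $g\ge 8k-6$. Because $C$ is shortest, every vertex of $C$ has exactly one out-neighbour on $C$; otherwise a chord would create a shorter cycle. So each $x\in V(C)$ has an out-arc leaving $C$. By strong connectivity, a path starting with that arc returns to $C$. Choose such an ear $P$ from $x$ to $y$ with $P$ internally disjoint from $C$, and write $C[x,y]$ for the arc of $C$ from $x$ to $y$.
\begin{itemize}
\item If $|C[x,y]|\ge k$ and $|P|\ge k$, then $P$ and $C[x,y]$ form the desired $C(k,k)$.
\item $P$ together with $C[y,x]$ is a directed cycle, so $|P|+|C[y,x]|\ge g$. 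Also $|C[x,y]|+|C[y,x]|=g$, which forces $|P|\ge |C[x,y]|$.
\end{itemize}
Hence the only bad case is $|C[x,y]|<k$, a short forward jump. If every ear is a short forward jump, I would use the $g\ge 8k-6$ vertices of $C$ to combine several ears. Take ears $P_1$ from $x_1$ to $y_1$ and $P_2$ from $x_2$ to $y_2$ that together cover a long stretch of $C$. Route one path along $C$ and the other through $P_1$, a piece of $C$, and $P_2$, arranging that the two paths share only their endpoints.

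The main obstacle is ears that intersect each other off $C$. To handle this I would choose the ear structure extremally: for example, take $y$ as far forward as possible, or pick a minimal strongly connected substructure containing $C$. This should make the ears disjoint or nested in a controlled way. The constant $8k-6$ leaves room for roughly four or eight segments of length $k$ around $C$ while the case analysis is carried out. I expect this step to be the hardest.

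Since this is the cited theorem of Picasarri-Arrieta and Rambaud, within this paper it may simply be invoked. The plan above sketches how I would reprove it.
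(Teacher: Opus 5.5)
Your lower bound, the reduction to a strongly connected digraph, and the observation that an ear $P$ from $x$ to $y$ off a shortest cycle $C$ satisfies $\|P\|\ge\|C[x,y]\|$ are all correct. The last point means only ears with $\|C[x,y]\|<k$, including $y=x$, are harmful, and it is exactly how the paper exploits isometric cycles. But the case you defer, where every ear is a short forward jump, is the entire content of the theorem, and your sketch does not handle it.

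Your proposed combination of ears fails:
\begin{itemize}
\item If the ears are consecutive along $C$ ($x_1,y_1,x_2,y_2$ in cyclic order), the routes $P_1\circ C[y_1,x_2]\circ P_2$ and $C[x_1,y_2]$ share $C[y_1,x_2]$.
\item If they cross ($x_1,x_2,y_1,y_2$), the routes $P_1\circ C[y_1,y_2]$ and $C[x_1,x_2]\circ P_2$ are disjoint. However, their guaranteed length is only $\|C[x_1,y_2]\|$, which may be less than $k$, and nothing forces ears to cross at all.
\end{itemize}
More fundamentally, the second out-neighbour of some $c\in C$ may start a large subdigraph $H$ from which every path back to $C$ returns to $c$ itself. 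This is precisely the cycle $C'$ with $V(C)\cap V(C')=\{\tilde c_{i+1}\}$ in Claim~\ref{intersect_2}. Then every ear is a loop at a single vertex, no combination of ears with $C$ gives $C(k,k)$, and the subdivision has to be found inside $H$. Recursing into $H$ requires induction. But $D[V(H)\cup\{c\}]$ has $c$ of out-degree $1$, so a minimal counterexample with $\delta^+\ge 2$ everywhere cannot be applied to it. No extremal choice of ears along a fixed $C$ sees this.

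The paper does not reprove this cited result directly. For $k\ge 2$ it follows from Theorem~\ref{main theorem}, since $8k-6\ge 4k+2$, and that proof shows what is missing:
\begin{itemize}
\item[(a)] Strengthen the statement to allow one exceptional vertex $v_0$ of out-degree $1$, and take a minimal counterexample for that statement. Subdigraphs such as $D[U]$ in Claim~\ref{intersect_2}(iii) and $D[S]$ then become legitimate smaller instances.
\item[(b)] Replace ears by Menger's theorem. For an arc $uv$ of $C$, two internally disjoint $(v,u)$-paths would each close a cycle with $uv$, so each has length at least $g-1\ge k$. Hence a $(v,u)$-cut-vertex exists (Claim~\ref{cut}). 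These cut-vertices appear in order along $C$ with consecutive gaps at most $k-1$. They define reach-sets $X_i$ that are almost pairwise disjoint, and every overlap forces a chord that shortens $C$.
\item[(c)] Make the extremal choice global. Among all isometric cycles, not just shortest ones, take $C$ maximizing the largest component of $D-C$. A length count against $\|C\|\ge 4k+2$ yields $\lambda$ with $X_\lambda$ avoiding $v_0$, the other $X_\mu$, and that component. Inside $X_\lambda$ one finds an isometric cycle $C'$ of $D$ for which $D-C'$ has a strictly larger component, a contradiction.
\end{itemize}
Without something playing the role of (a) and (c), your short-forward-jump case cannot be closed.
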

		
		On the other hand, Picasarri-Arrieta et al. also constructed a family of digraphs in which every digraph has minimum out-degree two and girth $k-1$ but contains no subdivision of $C(k,k)$, which implies that mader$_{\delta^+}^{(k-1)}(C(k,k))\geq 3$, and they proposed a problem as below.
		
		\begin{problem}[\cite{Picasarri-Arrieta}]\label{girth}
			Find the minimum value $g\in [k,8k-6]$ such that mader$_{\delta^+}^{(g)}(C(k,k))\leq 2$. 
		\end{problem}

		In this paper, we strengthen  Theorem~\ref{largegirth} by reducing the girth  condition from $8k-6$ to $4k+2$.
		\begin{theorem}\label{main theorem}
			For any positive integer $k$, mader$_{\delta^+}^{(4k+2)}(C(k,k))=2$.
		\end{theorem}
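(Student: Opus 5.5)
Observe first that the lower bound $\mathrm{mader}_{\delta^+}^{(4k+2)}(C(k,k))\ge 2$ is immediate. A directed cycle of length at least $4k+2$ has minimum out-degree one, and it contains no subdivision of $C(k,k)$, because such a subdivision needs a vertex of out-degree two. So the work is the upper bound: every digraph $D$ with $\delta^+(D)\ge 2$ and $g(D)\ge 4k+2$ contains a subdivision of $C(k,k)$. I would argue by contradiction, taking a counterexample $D$ with $|V(D)|+|A(D)|$ minimal. By deleting arcs we may assume every vertex has out-degree exactly two. We may also assume $D$ is strongly connected, by passing to a terminal strong component; that component still has out-degree at least two, since every out-arc of its vertices stays inside it.

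The basic tool is to find two internally disjoint directed paths $P,Q$ from a common vertex $u$ to a common vertex $v$ with $|P|,|Q|\ge k$. Such a pair is a subdivision of $C(k,k)$. The girth condition supplies the lengths. If $P\cup Q$ arises from a vertex $u$ and a shortest cycle $C$ through a neighbourhood structure, then any "short-cut" path that creates a short block also creates a directed cycle of length less than $4k+2$, which is impossible.

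Concretely, I would proceed as follows. Take a shortest directed cycle $C$, so $|C|\ge 4k+2$ and $C$ is induced, hence chordless. Every vertex $x\in C$ has a second out-neighbour $x'\notin C$. By strong connectivity there is a directed path from $x'$ back to $C$; let $R_x$ be a shortest such path, ending at $y\in C$. Then $xR_xy$ together with the segment $C[x,y]$ forms a $C(a,b)$ structure with $a=|R_x|+1$ and $b=|C[x,y]|$. The cycle $R_x\cup C[y,x]$ has length at least $4k+2$. So if $|C[x,y]|\le 2k+1$, then $|R_x|\ge 2k+1\ge k$. We succeed as soon as both blocks have length at least $k$. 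The bad cases are therefore when $|C[x,y]|<k$, or when $|R_x|<k$, which forces $|C[x,y]|\ge 4k+2-|R_x|-\dots$.

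The next step is to combine ear paths from different vertices of $C$. With two ears $R_x$ (from $x$ to $y$) and $R_z$ (from $z$ to $w$), arranged in an interleaving or nested manner on $C$ and internally disjoint (or made disjoint by rerouting at their first intersection), we form two paths from $x$ to $w$, or from $z$ to $y$, each consisting of an ear plus arcs of $C$. Because $|C|\ge 4k+2$, splitting $C$ into four consecutive stretches around the endpoints guarantees that some choice gives both blocks length at least $k$. This is where the constant $4k+2$ should come from: four stretches of length at least $k$ plus slack.

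I expect the main obstacle to be exactly this case analysis. The dangerous case is when every ear re-enters $C$ very soon after it leaves, so each $C[x,y]$ is shorter than $k$, while the ears themselves are long. It is also hard to control intersections between ears off $C$. My first attempt would handle this by choosing $x$ and the ear greedily, for instance taking the ear whose landing point $y$ is farthest along $C$, and then choosing a second vertex $z\in C(x,y)$. If all landing points of ears from $C(x,y)$ stay inside $C[x,y]$, then the minimality of the counterexample should be used: we contract or replace the segment to obtain a smaller digraph with out-degree at least two and girth still at least $4k+2$, and pull back a $C(k,k)$ subdivision from it. Verifying that this reduction preserves the girth bound is the delicate part.
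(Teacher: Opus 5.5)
\textbf{Verdict: there is a genuine gap.} Your lower bound and your reductions (out-degree exactly two, passing to a terminal strong component) are fine. The upper bound, however, is not established, because the proposal stops exactly at the only hard case.

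First, a correction that changes the picture. The cycle $xx'R_x\cup C[y,x]$ is long simply because $C[y,x]$ is long. So girth gives nothing beyond $|R_x|+1\ge |C[x,y]|$, which is also what the isometry of $C$ gives. Your inference ``$|C[x,y]|\le 2k+1$ implies $|R_x|\ge 2k+1$'' is false.

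That one inequality already settles every ear with $|C[x,y]|\ge k$. So the whole problem is the situation where every ear re-enters $C$ fewer than $k$ steps ahead. In that situation nothing forces the ears to be long: an ear $x\to x'\to y$ with $|C[x,y]|=2$ violates neither the girth bound nor isometry. Two such ears whose endpoints lie in a window of length less than $2k$ can produce only short blocks. Your ``four stretches of length at least $k$'' pigeonhole needs the endpoints spread around $C$, which is precisely what fails here, and nothing in the proposal produces ears that land far away.

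The suggested remedy --- contract or replace a segment and invoke minimality --- does not work as stated.
\begin{itemize}
\item Contracting part of $C$ shortens every cycle through it, so $g\ge 4k+2$ is not preserved.
\item A $C(k,k)$-subdivision through the contracted vertex need not lift back.
\item More fundamentally, with the hypothesis $\delta^+\ge 2$ everywhere, minimality cannot be applied to the natural sub-pieces. A vertex set closed under out-neighbours except at one cut vertex has a vertex of out-degree one.
\end{itemize}

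The paper supplies exactly the missing ideas:
\begin{itemize}
\item[(1)] It proves the stronger statement allowing one vertex $v_0$ with out-degree $\ge 1$, so such pieces become legitimate smaller instances.
\item[(2)] Via Menger and the girth, $\dist_D(u,v)\le 3k+2$ forces a $(v,u)$-cut-vertex. Hence, for an arc $ab$ of $C$, the $(b,a)$-cut-vertices form a chain on $C$ with consecutive gaps at most $k-1$. This is the rigorous form of your ``dangerous case''.
\item[(3)] Let $X_i$ be the set of vertices reachable from $\tilde c_i$ avoiding $\tilde c_{i+1}$. These sets can only overlap consecutively. Each overlap forces an arc such as $\tilde c_{i+1}\tilde c_{i+2}$, cutting the bound $\|C\|\le (h+3)k-(h+1)$ by $k-2$. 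Each overlap also traps $v_0$ in $X_i\cup X_{i+1}$, since otherwise $D[U]$ is a smaller counterexample with one vertex of out-degree one. Together with $g(D)\ge 4k+2$, this yields an isolated $X_\lambda$ missing both $v_0$ and the largest component $A$ of $D-C$.
\item[(4)] $C$ is chosen not as a shortest cycle but as an isometric cycle maximizing $\rho(C)$. An isometric cycle of $D[S]$, with $S\subseteq X_\lambda$ and $\delta^+(D[S])\ge 1$ because $v_0\notin X_\lambda$, is shown to be isometric in $D$ and to leave a component larger than $A$. This is a contradiction.
\end{itemize}
Without some substitute for (1)--(4), your plan has no way to close the short-ear case.
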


		Moreover, we show that the girth condition can not be reduced to $k$ by constructing a family of digraphs with $g(D)\ge k$ and $\delta^+(D)=2$ that contain no $C(k,k)$. 
		\begin{theorem}\label{main theorem2}
			For any positive integer $k$, mader$_{\delta^+}^{(k)}(C(k,k))\geq 3$.
		\end{theorem}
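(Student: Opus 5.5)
The plan is to exhibit, for every $k$, a digraph $D_k$ with $\delta^+(D_k)=2$ and $g(D_k)\ge k$ that contains no subdivision of $C(k,k)$. The obstruction will come purely from counting vertices. Every subdivision of $C(k,k)$ consists of two internally disjoint directed paths of lengths at least $k$ with common ends. It therefore has at least $2k$ vertices: the two ends plus at least $2(k-1)$ internal vertices. Hence \emph{any} digraph on at most $2k-1$ vertices contains no subdivision of $C(k,k)$. It remains to find a digraph on $2k-1$ vertices with minimum out-degree two and girth at least $k$.

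For $k\ge 2$, let $n=2k-1$ and take the square of the directed cycle. Its vertex set is $\mathbb{Z}_n$, and its arcs are $i\to i+1$ and $i\to i+2$ for all $i$, with indices taken modulo $n$. Since $n\ge 3$, the two out-neighbours $i+1$ and $i+2$ are distinct and differ from $i$, so the digraph has no loops or parallel arcs and $\delta^+=2$. The girth is bounded by following the displacement. Each arc increases the index by $1$ or $2$ when viewed in $\mathbb{Z}$. A directed cycle must therefore have total displacement a positive multiple of $n$, hence at least $n=2k-1$. With steps of size at most $2$, this needs at least $\lceil (2k-1)/2\rceil=k$ arcs. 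Thus $g\ge k$, and the cycle $0\to2\to\cdots\to 2k-2\to 0$ shows $g=k$ exactly. For $k=2$ this is $\smalldoublearrow{K}_3$, which has girth $2$.

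The case $k=1$ is handled separately. A subdivision of $C(1,1)$ would require two parallel arcs, and the girth condition $g\ge 1$ is vacuous. So any simple digraph with $\delta^+=2$ works, for instance $\smalldoublearrow{K}_3$. To obtain an infinite family rather than a single example, take disjoint unions of copies of $D_k$. Minimum out-degree and girth are unchanged, and a subdivision of the connected digraph $C(k,k)$ would have to lie inside one component, which has too few vertices.

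There is no genuine obstacle in this argument. The only points needing care are the girth computation via displacement, and the observation that internal disjointness forces at least $2k$ distinct vertices; that observation is exactly what rules out wrap-around paths, so no monotonicity argument is needed. Together these give mader$_{\delta^+}^{(k)}(C(k,k))\ge 3$.
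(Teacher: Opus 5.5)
\textbf{Gap at $k=1$.} Your argument is correct for every $k\ge 2$, but the paragraph on $k=1$ is wrong and cannot be repaired. A subdivision of $C(1,1)$ replaces each of the two parallel arcs by a directed path, so it is just a pair of internally disjoint $(u,w)$-paths. Parallel arcs are needed only when both paths have length one.

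Your own example $\smalldoublearrow{K}_3$ on $\{x,y,z\}$ contains the arc $xy$ together with the path $xzy$, which is a subdivision of $C(1,1)$. In fact, for digraphs without loops or parallel arcs (the convention you adopt), every digraph with $\delta^+\ge 2$ contains such a subdivision. Take a longest path $v_1v_2\cdots v_m$. All out-neighbours of $v_m$ lie on it, so $v_m$ has out-neighbours $v_i,v_j$ with $i<j<m$. The arc $v_mv_j$ together with the path $v_mv_iv_{i+1}\cdots v_j$ is then a subdivision of $C(1,1)$.

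Hence mader$_{\delta^+}^{(1)}(C(1,1))\le 2$. The statement as written is false at $k=1$ and has to be restricted to $k\ge 2$, so you should exclude this case rather than claim it. The paper's construction is degenerate at $k=1$ too, since a cycle of length $1$ does not have two vertices to serve as out-neighbours of $v_0$.

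\textbf{Comparison for $k\ge 2$.} Here your route is genuinely different from the paper's. The paper takes $k$ disjoint $k$-cycles $C_i$ linked cyclically through extra vertices $v_i$, with $N^+(v_i)\subseteq V(C_i)$ and $V(C_i)\subseteq N^-(v_{i+1})$. It leaves the absence of a $C(k,k)$-subdivision as ``one can check''. That check is a case analysis on the cut-vertex structure of the cyclic chain of blocks $\{v_i\}\cup V(C_i)\cup\{v_{i+1}\}$, using that each $C_i$ has only $k$ vertices.

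You replace all of this by the order bound ($2k$ vertices in any subdivision of $C(k,k)$) and the square of a directed $(2k-1)$-cycle, with a clean displacement argument for the girth. Your proof is shorter and every step is checkable. It also covers $k=2$, where the paper's construction actually fails: the two out-neighbours $x,y$ of $v_0$ are both in-neighbours of $v_1$, so $v_0xv_1$ and $v_0yv_1$ form a copy of $C(2,2)$.

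What the paper's construction offers is examples of order $k^2+k$, showing the obstruction is not merely a lack of vertices. Vertex counting alone can never produce an example of girth $k+1$. By the out-degree-two case of the Caccetta--H\"aggkvist conjecture (proved by Caccetta and H\"aggkvist), every digraph on $2k-1$ vertices with $\delta^+\ge 2$ has girth at most $k$.
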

		\begin{proof} It is sufficient to give a family of digraphs of minimum out-degree at least two and girth at least $k$ such that every member contains no subdivision of $C(k,k)$.\\ 
			\noindent{\bf Construction $\mathcal{D}_k$:} Let $v_0,v_1,\ldots,v_{k-1}$ be $k$ vertices and $C_0,C_1,\ldots,C_{k-1}$ be $k$ cycles of the same length $k$. Identify $i$ with $i\bmod k$ for every integer $i$ and we construct the desired digraphs as below\;(see Figure~\ref{Fig1}): For each $i\in [0,k-1]$, select two arbitrary vertices of $C_i$ to be out-neighbors of $v_i$. Moreover, every vertex of $C_i$ is the in-neighbor of $v_{i+1}$. 
			
			One can check that every $D\in\mathcal{D}_k$ satisfies that $\delta^+(D)=2$ and $g(D)=k$ without subdivision of $C(k,k)$, as desired.
		\end{proof}
		
		\begin{figure}[hbtp]
			\begin{center}	
				\begin{tikzpicture}[thick,scale=0.618, every node/.style={transform shape}]\label{fig_3}			
					\tikzset{vertex/.style = {circle,fill=black,minimum size=4pt, inner sep=0pt}}
					\tikzset{edge/.style = {->,> = latex'}}
					\node[vertex, label=180:$v_0$] (v0) at (180:5) {};
					\begin{scope}[shift={(135:4)}]
						\node[vertex, label={[label distance=0.4em]90:$C_0$}] (C01) at (90:2) {};
						\node[vertex] (C02) at (135:1.2649110640673517327995574177731) {};
						\coordinate (C03) at (180:1) {};
						\coordinate (C04) at (225:1.2649110640673517327995574177731) {};
						\coordinate (C05) at (270:2) {};
						\coordinate (C06) at (315:1.2649110640673517327995574177731) {};
						\coordinate (C07) at (360:1) {};
						\coordinate (C08) at (45:1.2649110640673517327995574177731) {};
						\draw[edge] (C01) to[out=180, in=80.88975466] (C02);
						\draw[thick,edge,dashed] (C02) to[out=-99.11024534, in=90] (C03) to[out=-90, in=99.11024534] (C04) to[out=-80.88975466, in=-180] (C05) to[out=0, in=-99.11024534] (C06) to[out=80.88975466, in=-90] (C07) to[out=90, in=-80.88975466] (C08) to[out=99.11024534, in=0] (C01);
					\end{scope}
					\draw[edge] (v0) to (C04);
					\draw[edge] (v0) to (C05);
					\node[vertex, label=90:$v_1$] (v1) at (90:5) {};
					\draw[edge] (C01) to (v1);
					\draw[edge] (C02) to (v1);
					\draw[edge] (C03) to (v1);
					\draw[edge] (C04) to (v1);
					\draw[edge] (C05) to (v1);
					\draw[edge] (C08) to (v1);
					\begin{scope}[shift={(45:4)}]
						\node[vertex, label={[label distance=0.4em]90:$C_1$}] (C11) at (90:2) {};
						\node[vertex] (C12) at (135:1.2649110640673517327995574177731) {};
						\coordinate (C13) at (180:1) {};
						\coordinate (C14) at (225:1.2649110640673517327995574177731) {};
						\coordinate (C15) at (270:2) {};
						\coordinate (C16) at (315:1.2649110640673517327995574177731) {};
						\coordinate (C17) at (360:1) {};
						\coordinate (C18) at (45:1.2649110640673517327995574177731) {};
						\draw[edge] (C11) to[out=180, in=80.88975466] (C12);
						\draw[thick,edge,dashed] (C12) to[out=-99.11024534, in=90] (C13) to[out=-90, in=99.11024534] (C14) to[out=-80.88975466, in=-180] (C15) to[out=0, in=-99.11024534] (C16) to[out=80.88975466, in=-90] (C17) to[out=90, in=-80.88975466] (C18) to[out=99.11024534, in=0] (C11);
					\end{scope}
					\draw[edge] (v1) to (C12);
					\draw[edge] (v1) to (C14);
					\node[vertex, label=0:$v_2$] (v2) at (0:5) {};
					\draw[edge] (C11) to (v2);
					\draw[edge] (C12) to (v2);
					\draw[edge] (C13) to (v2);
					\draw[edge] (C14) to (v2);
					\draw[edge] (C15) to (v2);
					\draw[edge] (C16) to (v2);
					\draw[edge] (C18) to (v2);
					\node at (-10:4.37) [scale=1.5][rotate=45]{$\dots\dots$};
					\begin{scope}[shift={(-45:4)}]
						\node[vertex, label={[label distance=0.4em]90:$C_{k-2}$}] (Ck-1) at (90:2) {};
						\node[vertex] (Ck-2) at (135:1.2649110640673517327995574177731) {};
						\coordinate (Ck-3) at (180:1) {};
						\coordinate (Ck-4) at (225:1.2649110640673517327995574177731) {};
						\coordinate (Ck-5) at (270:2) {};
						\coordinate (Ck-6) at (315:1.2649110640673517327995574177731) {};
						\coordinate (Ck-7) at (360:1) {};
						\coordinate (Ck-8) at (45:1.2649110640673517327995574177731) {};
						\draw[edge] (Ck-1) to[out=180, in=80.88975466] (Ck-2);
						\draw[thick,edge,dashed] (Ck-2) to[out=-99.11024534, in=90] (Ck-3) to[out=-90, in=99.11024534] (Ck-4) to[out=-80.88975466, in=-180] (Ck-5) to[out=0, in=-99.11024534] (Ck-6) to[out=80.88975466, in=-90] (Ck-7) to[out=90, in=-80.88975466] (Ck-8) to[out=99.11024534, in=0] (Ck-1);
					\end{scope}
					\node[vertex, label=below:$v_{k-1}$] (vk-1) at (-90:5) {};
					\draw[edge] (Ck-1) to (vk-1);
					\draw[edge] (Ck-2) to (vk-1);
					\draw[edge] (Ck-4) to (vk-1);
					\draw[edge] (Ck-5) to (vk-1);
					\draw[edge] (Ck-6) to (vk-1);
					\draw[edge] (Ck-7) to (vk-1);
					\draw[edge] (Ck-8) to (vk-1);
					\begin{scope}[shift={(-135:4)}]
						\node[vertex, label={[label distance=0.4em]90:$C_{k-1}$}] (Ck1) at (90:2) {};
						\node[vertex] (Ck2) at (135:1.2649110640673517327995574177731) {};
						\coordinate (Ck3) at (180:1) {};
						\coordinate (Ck4) at (225:1.2649110640673517327995574177731) {};
						\coordinate (Ck5) at (270:2) {};
						\coordinate (Ck6) at (315:1.2649110640673517327995574177731) {};
						\coordinate (Ck7) at (360:1) {};
						\coordinate (Ck8) at (45:1.2649110640673517327995574177731) {};
						\draw[edge] (Ck1) to[out=180, in=80.88975466] (Ck2);
						\draw[thick,edge,dashed] (Ck2) to[out=-99.11024534, in=90] (Ck3) to[out=-90, in=99.11024534] (Ck4) to[out=-80.88975466, in=-180] (Ck5) to[out=0, in=-99.11024534] (Ck6) to[out=80.88975466, in=-90] (Ck7) to[out=90, in=-80.88975466] (Ck8) to[out=99.11024534, in=0] (Ck1);
					\end{scope}
					\draw[edge] (vk-1) to (Ck6);
					\draw[edge] (vk-1) to (Ck8);
					\draw[edge] (Ck1) to (v0);
					\draw[edge] (Ck2) to (v0);
					\draw[edge] (Ck4) to (v0);
					\draw[edge] (Ck5) to (v0);
					\draw[edge] (Ck6) to (v0);
					\draw[edge] (Ck7) to (v0);
					\draw[edge] (Ck8) to (v0);
				\end{tikzpicture}
				\caption{$\mathcal{D}_k$ (The solid arcs represent the arcs and the dashed arcs represent the paths.)}\label{Fig1}
			\end{center}
		\end{figure}
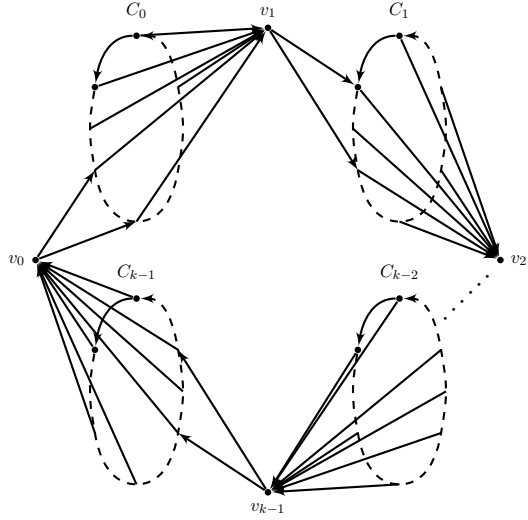

		The rest of the paper is organized as follows. In next section, we mainly give some notation and definitions. The proof of Theorem~\ref{main theorem} will be presented in Section 3. In the last section, we will give some remarks and discussions.

		\section{Preliminaries}
		
		\noindent
		
		Let $D=(V,A)$ be a digraph. The order of $D$ is denoted by $|D|=|V(D)|$ and its number of arcs is denoted by $\|D\|=|A(D)|$. We write $v\in D$ for $v\in V(D)$ and $uv\in D$ for $(u,v)\in A(D)$. 
		For every vertex $v\in D$, the \emph{out-neighborhood} of $v$ is the set $N_{D}^{+}(v)=\{u\in D:vu\in D\}$. Similarly, the \emph{in-neighborhood} of $v$ is the set $N_{D}^{-}(v)=\{u\in D : uv\in D\}$. The vertices in $N^{+}_{D}(v)$ and $N^{-}_{D}(v)$ are called the \emph{out-neighbors} and \emph{in-neighbors} of $v$, respectively. Define $d_{D}^+(v)=|N^{+}_{D}(v)|$ and $d_{D}^-(v)=|N^{-}_{D}(v)|$ as the out-degree and in-degree of $v$, respectively. We also define $\delta^+(D) = \min\limits_{v \in D} d^+_{D}(v)$, $\delta^-(D) = \min\limits_{v \in D} d^-_{D}(v)$, $\Delta^+(D) = \max\limits_{v \in D} d^+_{D}(v)$ and $\Delta^-(D) = \max\limits_{v \in D} d^-_{D}(v)$. For integers $a$ and $b$, we use the notation $[a,b]=\{a, a+1, \dots, b\}$, and for convenience, we further denote $[a]=[1,a]$.
		
		An \emph{oriented graph} is a digraph such that there is at most one arc between every pair of distinct vertices. An \emph{orientation} of a graph $G$ is an assignment of exactly one direction to each edge of $G$. A \emph{directed path}\;(\emph{path} for short) is an oriented path such that any two consecutive arcs have the same orientation. Analogously, a \emph{directed cycle}\;(\emph{cycle} for short) is an oriented cycle such that every pair of two consecutive arcs have the same orientation. A digraph is \emph{acyclic} if it has no cycle, otherwise we say it is nonacyclic. If $D$ is a nonacyclic digraph, then we denote $g(D)$ to be the \emph{girth} of $D$, which is the length of the shortest cycles of $D$. We further define $g(D)=+\infty$ if $D$ is acyclic.
		
		Given a path $P$, we denote by $\init(P)$ the initial vertex of $P$ and $\term(P)$ the terminal vertex of $P$. Let $Q$ be another path with $V(P)\cap V(Q)=\{\term(P)\} = \{\init(Q)\}$. By $P\circ Q$ we mean the concatenation of $P$ and $Q$, namely, the path obtained by first passing through every vertex of $P$ and then passing through those of $Q$.
		A $(u,v)$-path in $D$ is a path with initial vertex $u$ and terminal vertex $v$. Similarly, for two vertex sets $A,B\subseteq V(D)$, an $(A,B)$-path in $D$ is a path with initial vertex in $A$ and terminal vertex in $B$.
		The \emph{distance} from $u$ to $v$ in $D$, denoted by $\dist_D(u,v)$, is the length of a shortest $(u,v)$-path in $D$. We further denote $\dist_D(u,v)=+\infty$ if no such path exists.
		A {\it $(u,v)$-cut-vertex} $c$ is a vertex such that $\dist_{D-c}(u,v)=+\infty$.
		
		Given two vertices $a,b$ in a cycle $C$ (with possibly $a=b$), we denote by $C[a,b]$ the path from $a$ to $b$ along $C$ (which is the single-vertex path when $a=b$).
		Moreover, we define $C[a,b) = C[a,b]-\{b\}$, $C(a,b] = C[a,b]-\{a\}$ and $C(a,b) = C[a,b]-\{a,b\}$. 
		We use similar notions $P[a,b],P[a,b),P(a,b]$ and $P(a,b)$ for a path $P$.
		
		A digraph $D$ is \emph{connected} if its underlying graph is connected, and it is \emph{strongly connected} if $\dist_D(u,v)$ is finite for any two different vertices $u,v \in D$.

		We say a cycle $C$ is \emph{isometric} in $D$, if for every pair of $a,b \in C$, $\|C[a,b]\| = \dist_D(a,b)$ holds. Note that if $D$ is nonacyclic, then every cycle of length $g(D)$ is an isometric cycle. Let $\rho(C)$ be the number of vertices in a largest connected component of $D-C$. For integers $k,\ell \geq 1$, recall that $C(k,\ell)$ is a digraph formed by two internally disjoint paths $P_1$ and $P_2$ of lengths $k$ and $\ell$ such that $\init(P_1)=\init(P_2)$ and $\term(P_1)=\term(P_2)$.

		\section{Proof of Theorem~\ref{main theorem}}
		

		
		
		\noindent

		Actually, we will prove the following stronger statement as that in \cite{Picasarri-Arrieta}: for any digraph $D$ of $g(D)\geq 4k+2$, if $d_{D}^{+}(v_0)\geq 1$ and $d_{D}^{+}(v)\geq 2$ for each $v\in D-v_0$, then $D$ contains a subdivision of $C(k,k)$. Assume the statement is not true. Let $\mathcal D$ be the family of counterexamples, that is, each digraph in $\mathcal D$ meets the degree conditions and girth condition, but contains no subdivision of $C(k,k)$. Let $D\in \mathcal D$ be a smallest counterexample, i.e.,  $|D|$ is minimum and $\|D\|$ is also minimum if equality holds. We first show that  $d^{+}_{D}(v_0)=1$ and $d^{+}_{D}(v)=2$ for each $v\in D-v_0$. Otherwise, we can find a smaller counterexample by deleting an arc with tail $v_0$ if $d^{+}_{D}(v_0)\geq 2$ or an arc with tail $v$ if $d^{+}_{D}(v)\geq 3$, which contradicts the minimality of $D$.
		
		Next we shall demonstrate that $D$ is strongly connected. Suppose to the contrary that $D$ is not strongly connected. Hence, $D$ has $h(\geq 2)$ strongly connected components $D_{1},D_{2},\ldots, D_{h}$ such that there is no arc from $V(D_{j})$ to $V(D_{i})$ for any $1\leq i<j\leq h$. One can easily see that $D_{h}\in \mathcal D$, which is smaller than $D$, leading to a contradiction. We thus deduce that $D$ is strongly connected.

		
		
		
		
		\begin{claim}\label{cut}
			For $u$,$v\in D$, if $\dist_D(u,v)\leq 3k+2$, then there exists a $(v,u)$-cut-vertex.
		\end{claim}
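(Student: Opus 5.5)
We argue by contradiction. Suppose $\dist_D(u,v)\le 3k+2$ but no vertex separates $v$ from $u$. Fix a shortest $(u,v)$-path $P$, so $\|P\|\le 3k+2$; $u\neq v$ may be assumed, since otherwise the statement is vacuous or trivial. By the vertex version of Menger's theorem (directed), the absence of a $(v,u)$-cut-vertex gives two internally disjoint $(v,u)$-paths $Q_1,Q_2$. When $vu$ is an arc we use $Q_1=vu$ together with one further path internally avoiding it. Each $Q_i\circ P$ closes a cycle, so $\|Q_i\|\ge g(D)-\|P\|\ge 4k+2-(3k+2)=k$. Thus both $Q_1$ and $Q_2$ have length at least $k$.

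The goal is to convert this into a subdivision of $C(k,k)$. A subdivision of $C(k,k)$ is exactly a pair of internally disjoint paths with common ends, each of length at least $k$. So $Q_1\cup Q_2$, being two internally disjoint $(v,u)$-paths each of length at least $k$, is already a subdivision of $C(k,k)$ with initial vertex $v$ and terminal vertex $u$. This contradicts $D\in\mathcal D$.

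The only delicate point is the application of Menger. It requires that $vu$ is not an arc when we demand two internally disjoint paths avoiding a single cut vertex. If $vu\in D$, then the cycle $vu\circ P$ has length at most $3k+3<4k+2$ when $k\ge 2$, contradicting the girth hypothesis, so this case does not occur. For $k=1$ the statement follows directly from the same Menger argument, since paths of length at least $1$ always suffice.

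Beyond this, the main thing to check is that the internally disjoint paths supplied by Menger are genuinely internally disjoint from each other. They need not avoid $P$, but that is not needed, since only $Q_1$ and $Q_2$ form the subdivision. The length bound uses only that $Q_i\circ P$ contains a closed walk through $u$ and $v$. More precisely, $Q_i\cup P$ contains a directed closed walk of length $\|Q_i\|+\|P\|$, which contains a cycle of length at most $\|Q_i\|+\|P\|$. Hence $\|Q_i\|+\|P\|\ge g(D)$, and the bound $\|Q_i\|\ge k$ follows as claimed.
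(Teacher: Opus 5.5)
Your argument is essentially the paper's: Menger gives two internally disjoint $(v,u)$-paths; each closes up with a shortest $(u,v)$-path into a closed walk, so $g(D)\ge 4k+2$ forces each to have length at least $k$, and their union is a subdivision of $C(k,k)$. Your check that $vu\notin D$, which the vertex form of Menger needs and which girth gives for $k\ge 2$, is care the paper skips; your $k=1$ remark is not justified (if $vu\in D$ no vertex separates $v$ from $u$, so Menger supplies no second path), but the paper passes over this degenerate case too.
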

		
		\begin{proof}[Proof of Claim~\ref{cut}.]
			If not, then by Menger's Theorem, there exist two internally disjoint $(v,u)$-paths $P_1$ and $P_2$. Since $g(D)\geq 4k+2$ and $\dist_D(u,v)\leq 3k+2$, it follows that both $P_1$ and $P_2$ have lengths at least $k$. Consequently, the union of $P_1$ and $P_2$ forms a subdivision of $C(k,k)$, which leads to a contradiction.
		\end{proof}
		
		Pick an isometric cycle $C$ among all isometric cycles in $D$ such that $\rho(C)$ is maximum. Then $C$ has length at least $g(D)\ge 4k+2$. Let $uv$ be an arbitrary arc of $C$, i.e., $\dist_D(u,v)=1<3k+2$. By Claim~\ref{cut}, $D$ contains a $(v,u)$-cut-vertex. Observe that all such $(v,u)$-cut-vertices must lie on $C$. Let $\ell(uv)$ be the number of $(v,u)$-cut-vertices, and let $c_1(uv),c_2(uv),\ldots,c_{\ell(uv)}(uv)$ be all the $(v,u)$-cut-vertices. For simplicity, we write $\ell$ for $\ell(uv)$ whenever there is no ambiguity. It is clear that $\ell\geq 1$ and any path with initial vertex $v$ and terminal vertex $u$ passes through all these vertices. We can assume without loss of generality that $\dist_{D}(v,c_{j}(uv))>\dist_{D}(v,c_{i}(uv))$ for every pair of $i,j$ with $1\leq i<j\leq \ell$. For simplicity, we define $c_0(uv)=v$ and $c_{\ell+1}(uv)=u$. Let $i_0$ be the least index such that $\dist_D(v,c_{i_0}(uv))\geq k$ and $i_1$ be the largest index such that $\dist_D(c_{i_1}(uv),u)\geq k$. Let $h=i_1-i_0$. It is convenient to denote $\tilde{c}_i(uv)=c_{i_0+i}(uv)$ for any $i\in [0,h]$.

		\begin{claim}\label{order}
			Every path from $v$ to $u$ passes through $c_{1}(uv),c_{2}(uv),\ldots,c_{\ell}(uv)$ in this order.
		\end{claim}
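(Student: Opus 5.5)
We argue by contradiction, using only the facts that every $(v,u)$-path meets every $(v,u)$-cut-vertex and that the cut-vertices are indexed so that $\dist_D(v,c_i(uv))$ increases with $i$. The tool is a rerouting lemma. Suppose a cut-vertex $c_j$ lies on a $(v,u)$-path $P$. Then the subpath $P[c_j,u]$ is a $(c_j,u)$-path. Moreover, for any $i$ and any shortest $(v,c_j)$-path $Q$, the concatenation of $Q$ with $P[c_j,u]$ contains a $(v,u)$-path, obtained by shortcutting at the first common vertex.

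Fix $1\le i<j\le \ell$, and suppose some $(v,u)$-path $P$ meets $c_j$ before $c_i$. Write $c_i=c_i(uv)$ and $c_j=c_j(uv)$ for brevity.

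\textbf{Step 1.} Take a shortest $(v,c_j)$-path $Q$. We claim $Q$ avoids $c_i$. If $Q$ passed through $c_i$, then its segment from $c_i$ to $c_j$ has length at least $1$, which gives
\[
\dist_D(v,c_j)=\dist_D(v,c_i)+\dist_D(c_i,c_j)>\dist_D(v,c_i).
\]
This inequality is consistent with the ordering, so no contradiction arises yet and we must look further. So consider $P[c_j,u]$, which does contain $c_i$ by assumption. The prefix $P[v,c_j]$ is a $(v,c_j)$-path avoiding $c_i$. The suffix $P[c_j,u]$ passes through $c_i$, giving a $(c_j,c_i)$-path $R=P[c_j,c_i]$.

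\textbf{Step 2.} Choose a shortest $(v,c_i)$-path $Q'$. Since $\dist_D(v,c_i)<\dist_D(v,c_j)$, the path $Q'$ does not contain $c_j$; otherwise $\dist_D(v,c_j)<\dist_D(v,c_i)$. Now append the part $P[c_i,u]$ to $Q'$ and shortcut at the first intersection. This yields a $(v,u)$-path $W$, and $W$ must contain $c_j$. Since $Q'$ avoids $c_j$, the vertex $c_j$ lies on $P[c_i,u]$, hence after $c_i$ on $P$. But $P$ also visits $c_j$ before $c_i$, so $P$ repeats a vertex, contradicting that $P$ is a path.

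The subtle point is the shortcutting in Step 2. If $Q'$ meets $P[c_i,u]$ at an earlier vertex $x$, then $W$ uses only $P[x,u]$, and $c_j$ must lie on $Q'[v,x]\cup P[x,u]$. Since $Q'$ avoids $c_j$, we get $c_j\in P[x,u]\subseteq P[c_i,u]$, so the argument goes through unchanged. Step 2 alone therefore yields the contradiction, and Step 1 is just a preliminary. The main care needed is the case $\dist_D(v,c_i)=\dist_D(v,c_j)$, but the indexing convention assumes strict inequality. Strictness holds anyway: two distinct cut-vertices on a shortest $(v,u)$-path are at distinct distances from $v$, because both lie on every such path.
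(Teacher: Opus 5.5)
Your Step~2 is a correct proof, but it reroutes at the opposite end of $P$ from the paper.

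\emph{The paper's route.} It keeps the prefix $P[v,c_j]$, which misses $c_i$ because $P$ meets $c_j$ first. It then continues along $C(c_j,u]$, which misses $c_i$ because the cut-vertices lie on $C[v,u]$ in increasing order of distance from $v$. This gives a $(v,u)$-route avoiding the cut-vertex $c_i$. It is a one-liner given the cycle already in hand. However, it tacitly uses that $C$ is isometric (to know that $c_i$ precedes $c_j$ along $C$). It also calls $P[v,c_j]\circ C(c_j,u]$ a path, although the two pieces may meet, so strictly it is only a walk containing a path.

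\emph{Your route.} You keep the suffix $P[c_i,u]$ and prepend a shortest $(v,c_i)$-path $Q'$, which misses $c_j$ for purely metric reasons. The cut-vertex property of $c_j$ then forces $c_j$ onto $P[c_i,u]$, i.e.\ after $c_i$. This never uses $C$: it shows that in any digraph, every $(v,u)$-path meets any two $(v,u)$-cut-vertices in increasing order of their distance from $v$. You also make the shortcutting explicit. Your closing remark correctly justifies the strict ordering of distances, which the paper assumes without comment.

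You should delete Step~1. Its opening assertion, that a shortest $(v,c_j)$-path avoids $c_i$, is never established, and it is actually false here. Once the claim holds, $P[c_j,u]$ avoids $c_i$ for every $(v,u)$-path $P$, so every $(v,c_j)$-path must pass through $c_i$. Nothing in Step~2 depends on Step~1. The stray ``for any $i$'' in your rerouting lemma should go as well.
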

		\begin{proof}[Proof of Claim~\ref{order}.]
			On the contrary, there must be a path $P$ from $v$ to $u$ passing through two vertices $c_{i}(uv),c_{j}(uv)$ with $\|P[v,c_{j}(uv)]\|<\|P[v,c_{i}(uv)]\|$, where $1\leq i<j\leq \ell$. We can see that $P[v,c_{j}(uv)]\circ C(c_{j}(uv),u]$ is a path starting at $v$ and terminating at $u$ which avoids $c_{i}(uv)$. However, this contradicts that $c_{i}(uv)$ is a $(v,u)$-cut-vertex.
		\end{proof}
		
		\begin{claim}\label{no cut}
			There is no $(c_i(uv),c_{i+1}(uv))$-cut-vertex for every $i\in [0,\ell]$.
		\end{claim}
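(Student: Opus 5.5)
The plan is to argue by contradiction: a $(c_i(uv),c_{i+1}(uv))$-cut-vertex would itself have to be a $(v,u)$-cut-vertex lying strictly between two consecutive ones, which is impossible. Fix $i\in[0,\ell]$ and suppose $w$ is a $(c_i(uv),c_{i+1}(uv))$-cut-vertex. By definition $w\notin\{c_i(uv),c_{i+1}(uv)\}$, and every $(c_i(uv),c_{i+1}(uv))$-path contains $w$.

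The first step is to show that $w$ is a $(v,u)$-cut-vertex. Let $P$ be any $(v,u)$-path. By Claim~\ref{order}, $P$ visits $c_1(uv),\dots,c_\ell(uv)$ in this order, so it meets $c_i(uv)$ before $c_{i+1}(uv)$. This also holds in the boundary cases $i=0$ and $i=\ell$, since $c_0(uv)=v=\init(P)$ and $c_{\ell+1}(uv)=u=\term(P)$. Hence $P[c_i(uv),c_{i+1}(uv)]$ is a $(c_i(uv),c_{i+1}(uv))$-path, so it contains $w$. Thus every $(v,u)$-path passes through $w$. Also $w\neq v,u$: we have $w\neq c_i(uv),c_{i+1}(uv)$, and every vertex of $P$ lying strictly between $c_i(uv)$ and $c_{i+1}(uv)$ is different from $v$ and $u$. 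So $w=c_j(uv)$ for some $j\in[1,\ell]$.

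The second step is to locate $w$ on $C$ and contradict the indexing. Take $P=C[v,u]$. Since $C$ is isometric, $\|C[v,x]\|=\dist_D(v,x)$ for every $x\in C[v,u]$, so positions along $C[v,u]$ are exactly distances from $v$. By the first step, $w$ lies on the subpath $C[c_i(uv),c_{i+1}(uv)]$ and is distinct from both endpoints. Since a path visits each vertex once, this gives
\[
\dist_D(v,c_i(uv))<\dist_D(v,w)<\dist_D(v,c_{i+1}(uv)).
\]
The cut-vertices were indexed in strictly increasing order of distance from $v$, with $c_0(uv)=v$ at distance $0$ and $c_{\ell+1}(uv)=u$ last along $C[v,u]$. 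So no $c_j(uv)$ has distance strictly between those of $c_i(uv)$ and $c_{i+1}(uv)$. This contradicts $w=c_j(uv)$.

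The only delicate point is the boundary cases $i=0$ and $i=\ell$, where one endpoint is $v$ or $u$ rather than a genuine cut-vertex. These are handled by the conventions $c_0(uv)=v$ and $c_{\ell+1}(uv)=u$, together with the fact that the distance ordering along the isometric path $C[v,u]$ extends to them. No girth or degree hypothesis is needed beyond Claim~\ref{order} and the isometry of $C$.
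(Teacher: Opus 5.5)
Your proof is correct and follows the paper's argument: by Claim~\ref{order}, every $(v,u)$-path meets $c_i(uv)$ before $c_{i+1}(uv)$, so the hypothetical cut-vertex lies on every $(v,u)$-path and must therefore be some $c_j(uv)$. The paper simply declares this ``a contradiction''; you additionally make the contradiction explicit by placing $w$ strictly between $c_i(uv)$ and $c_{i+1}(uv)$ on the isometric path $C[v,u]$, where positions equal distances from $v$, which contradicts the distance-based indexing of the cut-vertices.
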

		
		\begin{proof}[Proof of Claim~\ref{no cut}.]
			Suppose not, we denote $c$ to be a $(c_i(uv),c_{i+1}(uv))$-cut-vertex for some $i\in [0,\ell]$. Arbitrarily choose a path $P_{0}$ from $v$ to $u$. By Claim~\ref{order}, we know that $P_{0}$ first passes through $c_i(uv)$ and then passes through $c_{i+1}(uv)$. Moreover, we have $c\in P_{0}[c_i(uv),c_{i+1}(uv)]$ since $c$ is a $(c_i(uv),c_{i+1}(uv))$-cut-vertex. Thereby, we deduce that $c$ lies on every path with initial vertex $v$ and terminal vertex $u$. It follows that $c$ is also a $(v,u)$-cut-vertex, which is a contradiction.
		\end{proof}
		
		\begin{claim}\label{dis}
			$\dist_D(c_i(uv),c_{i+1}(uv))\leq k-1$ for every $i\in [0,\ell]$.
		\end{claim}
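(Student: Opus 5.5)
The plan is to combine Claim~\ref{no cut} with Menger's Theorem, in the same way as in the proof of Claim~\ref{cut}. Fix $i\in[0,\ell]$ and write $a=c_i(uv)$ and $b=c_{i+1}(uv)$. These are distinct vertices. Since $D$ is strongly connected, $\dist_D(a,b)$ is finite.

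First I dispose of the degenerate case where $ab$ is an arc of $D$. Then $\dist_D(a,b)=1\le k-1$ as soon as $k\ge 2$. For $k=1$ the statement $\dist_D(a,b)\le 0$ cannot hold for distinct vertices. So the case $k=1$ has to be excluded beforehand: I expect it is trivial, since any two internally disjoint paths with common ends already give a subdivision of $C(1,1)$. I would assume $k\ge2$ from here on.

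Now suppose $ab$ is not an arc. By Claim~\ref{no cut} there is no $(a,b)$-cut-vertex. Since $a$ and $b$ are non-adjacent, Menger's Theorem (vertex version) gives two internally vertex-disjoint $(a,b)$-paths $P_1$ and $P_2$. If both had length at least $k$, then $P_1\cup P_2$ would be a subdivision of $C(k,k)$, contradicting $D\in\mathcal D$. Hence one of them, say $P_1$, has length at most $k-1$, and therefore $\dist_D(a,b)\le\|P_1\|\le k-1$.

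The only delicate point is the use of Menger's Theorem. It requires $a$ and $b$ to be non-adjacent, which is why the arc case is treated separately. It also needs the absence of an $(a,b)$-cut-vertex to hold in $D$ itself, and Claim~\ref{no cut} provides exactly that. Unlike Claim~\ref{cut}, no girth argument is needed here: we do not need to show that both paths are long, only to deduce that one of them must be short.
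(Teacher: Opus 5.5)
Your proof is correct and follows the paper's argument: Claim~\ref{no cut} plus Menger's Theorem gives two internally disjoint $(c_i(uv),c_{i+1}(uv))$-paths, and these cannot both have length at least $k$. The paper phrases this as a contradiction from assuming $\dist_D(c_i(uv),c_{i+1}(uv))\geq k$, which for $k\geq 2$ already rules out the arc case you treat separately; your remark that $k=1$ is degenerate is fair, since the paper's proof tacitly needs $k\geq 2$ at the same point.
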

		
		\begin{proof}[Proof of Claim~\ref{dis}.]	
			Assume the statement is not correct. Then there exists an integer $i\in [0,\ell]$ such that $\dist_D(c_i(uv),c_{i+1}(uv))\geq k$. By Claim~\ref{no cut} and Menger's Theorem, we deduce that there exist two internally disjoint $(c_i(uv),c_{i+1}(uv))$-paths $P_1$ and $P_2$. As $\dist_D(c_i(uv),c_{i+1}(uv))\geq k$, we deduce that $P_1$ and $P_2$ have length at least $k$. Then their union forms a subdivision of $C(k,k)$, leading to a contradiction.
		\end{proof}
		
		By the minimality of $i_0$, we have $\dist_D(v,c_{i_0-1}(uv))\leq k-1$. By Claim~\ref{dis}, $\dist_D(v,\tilde{c}_0(uv))=\dist_D(v,c_{i_0-1}(uv))+\dist_D(c_{i_0-1}(uv),\tilde{c}_0(uv))\leq 2k-2$. Similarly, $\dist_D(\tilde{c}_h(uv),u)\leq 2k-2$.
				
		\begin{claim}\label{wlog}
			There exists an arc $ab$ on $C$ such that $\dist_D(b,\tilde{c}_0(ab))=k$.
		\end{claim}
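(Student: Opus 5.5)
The plan is to start from an arbitrary arc $uv$ of $C$ and, if $\dist_D(v,\tilde c_0(uv))>k$, slide the arc backwards along $C$ until the distance is exactly $k$.

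\textbf{Step 1: cut-vertices are positions on $C$.} Since $C[v,u]$ is itself a $(v,u)$-path, every $(v,u)$-cut-vertex lies on $C[v,u]$. Since $C$ is isometric, $\dist_D(v,c_i(uv))=\|C[v,c_i(uv)]\|$. So $\tilde c_0(uv)$ is the first cut-vertex along $C$ at $C$-distance at least $k$ from $v$, and the previous cut-vertex $c_{i_0-1}(uv)$ lies at distance at most $k-1$. By the bound preceding the claim, $d:=\dist_D(v,\tilde c_0(uv))\in[k,2k-2]$. If $d=k$ we are done, so assume $d\ge k+1$ and write $w=\tilde c_0(uv)$.

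\textbf{Step 2: the candidate arc.} Let $b$ be the vertex of $C[v,w]$ with $\|C[b,w]\|=k$, and let $a$ be the predecessor of $b$ on $C$. Then $\|C[v,b]\|=d-k\ge 1$, so $b\ne v$ and $a\in C[v,b)$. If $w$ is a $(b,a)$-cut-vertex, then $\dist_D(b,w)=k$ by isometry. Moreover, every $(b,a)$-cut-vertex lies on $C[b,a]$, so none lies strictly between $b$ and $w$ at distance $\ge k$. Hence $\tilde c_0(ab)=w$ and the claim follows. So the whole claim reduces to the following: if $w$ is a $(v,u)$-cut-vertex and $b\in C(v,w)$, then $w$ is a $(b,a)$-cut-vertex.

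\textbf{Step 3: proving $w$ separates $b$ from $a$.} Suppose $P$ is a $(b,a)$-path avoiding $w$. First, $P$ cannot meet any $z\in C(w,u]$. Otherwise $C[v,b]\circ P[b,z]\circ C[z,u]$ is a $(v,u)$-walk avoiding $w$, and it contains a $(v,u)$-path avoiding $w$, a contradiction. Thus $P$ lives in $D-C(w,u]-w$ and returns to $a\in C[v,b)$. Then $P$ together with the arc $ab$ is a cycle $Z$ of length at least $4k+2$ avoiding $w$ and $C(w,u]$.

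I expect this step to be the main obstacle: deriving a contradiction from $Z$. I would try two routes.
\begin{itemize}
\item[(i)] Build a subdivision of $C(k,k)$ directly. Take the last vertex where $P$ leaves $C[v,b]$ or $C[b,w)$ and use the segment $C[b,w]$, of length exactly $k$, as one block, with a piece of $Z$ as the other. The girth bound $4k+2$ and the bound $\dist_D(v,w)\le 2k-2$ should force the second block to have length at least $k$.
\item[(ii)] If (i) fails, use the extremal choice of $C$ (maximum $\rho(C)$). Replace a portion of $C$ by a shortest cycle through the detour $Z$ to obtain another isometric cycle whose complement has a strictly larger component, since the part of $D$ beyond $w$ is untouched while $Z$ is absorbed.
\end{itemize}
I would try (i) first. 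If the internal-disjointness bookkeeping becomes messy, I would weaken the target: instead of requiring $w$ itself to separate $b$ from $a$, I would only show that the cut-vertex structure of $ab$ contains a vertex at distance exactly $k$. For this I would use that, by Claim~\ref{dis}, consecutive cut-vertices are at distance at most $k-1$, so the cut-vertices of $ab$ cannot skip over distance $k$ by more than one gap. I would then adjust $b$ by a discrete intermediate-value argument as the arc moves one step at a time from $uv$ towards $w$.
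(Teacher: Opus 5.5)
Your Steps 1 and 2 and the first half of Step 3 are correct and match the paper's set-up. Choosing $b\in C(v,w)$ with $\|C[b,w]\|=k$ reduces the claim to showing that $w=\tilde c_0(uv)$ is a $(b,a)$-cut-vertex. Your concatenation argument also correctly shows that a $(b,a)$-path $P$ avoiding $w$ cannot meet $C(w,u]$. However, the proposal stops exactly where the contradiction has to be produced, so the claim is not proved.

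Route (i), as you describe it, cannot succeed. A $C(k,k)$ with $C[b,w]$ as one block needs a second $(b,w)$-path, and no piece of $Z=P\cup\{ab\}$ ends at $w$, because $P$ avoids $w$. The useful long object is not $Z$ but the long side of $C$ itself. Route (ii) and the intermediate-value fallback are only sketches: you have no monotonicity of the cut-vertex sets as the arc moves. The extremal choice of $C$ plays no role in this claim.

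The missing step is to extract a single chord of $C$ from $P$ and pair it with the long arc of $C$.
\begin{itemize}
\item Put $A=C[b,w)$ and $B=C[v,a]$, so that $V(C)=A\cup B\cup\{w\}\cup C(w,u]$.
\item Let $y$ be the first vertex of $P$ in $B$, and let $x$ be the last vertex of $P[b,y]$ in $A$.
\item The interior of $P[x,y]$ misses $A\cup B$ by the choice of $x$ and $y$. It misses $w$ and $C(w,u]$ by what you already showed. Hence $P[x,y]$ is internally disjoint from $C$.
\item Since $C[y,x]\subseteq C[v,w]$, we have $\|C[y,x]\|\le d\le 2k-2$. The girth condition then gives $\|C[x,y]\|\ge (4k+2)-(2k-2)=2k+4$.
\item Because $C$ is isometric, $\|P[x,y]\|\ge \dist_D(x,y)=\|C[x,y]\|$.
\end{itemize}
So $P[x,y]$ and $C[x,y]$ are internally disjoint $(x,y)$-paths, each of length at least $k$, which is a subdivision of $C(k,k)$ and a contradiction. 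This is how the paper concludes. With this step in place of routes (i) and (ii), your argument is complete.
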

				
		\begin{proof}[Proof of Claim~\ref{wlog}.]
			Let $uv$ be an arbitrary arc of $C$. If $\dist_D(v,\tilde{c}_0(uv))=k$, then we are done by taking $ab=uv$. Suppose that $\dist_D(v,\tilde{c}_0(uv))\in (k,2k-2]$. Choose the arc $ab\in C$ such that $\dist_D(b,\tilde{c}_0(uv))=k$. We shall show that $\tilde{c}_0(uv)$ is also a $(b,a)$-cut-vertex, together with $\dist_D(b,\tilde{c}_0(uv))=k$, this implies that $\tilde{c}_0(uv)=\tilde{c}_0(ab)$.
			
			Suppose to the contrary that $\tilde{c}_0(uv)$ is not a $(b,a)$-cut-vertex. Let $A=C[b,\tilde{c}_0(uv))$ and $B=C[v,a]$.
			Since $D$ is strong, there exists an $(A,B)$-path in $D$. Moreover, as $\tilde{c}_0(uv)$ is not a $(b,a)$-cut-vertex, we can choose an $(A,B)$-path $P$ in $D-\tilde{c}_0(uv)$ whose internal vertices are disjoint from $A\cup B$.
			
			We first claim that  $V(P)\cap V\bigl(C-(A\cup B)\bigr)=\emptyset.$ Otherwise, let $c$ be a vertex in this intersection. Let $x=\init(P)\in A$ and $y=\term(P)\in B$. Then the concatenation $C[v,x]\circ P[x,c]\circ C[c,u]$ is a $(v,u)$-path avoiding $\tilde{c}_0(uv)$, contradicting that $\tilde{c}_0(uv)$ is a $(v,u)$-cut-vertex.
			
			Since $g(D)\geq 4k+2$ and $\|C[y,x]\|\leq \|C[v,\tilde{c}_0(uv)]\|=\dist_D(v,\tilde{c}_0(uv))\leq 2k-2$, it follows that $\|C[x,y]\|\geq 2k+4$. As $C$ is isometric, we have $\|P\|\geq \dist_D(x,y)=\|C[x,y]\|$. Consequently, the union of $P$ and $C[x,y]$ forms a subdivision of $C(k,k)$, a contradiction.
		\end{proof}

        From now on we fix the arc $ab$. To simplify notation, we omit $(ab)$ and write $\ell$, $c_i$, and $\tilde c_i$ for $\ell(ab)$, $c_i(ab)$, and $\tilde c_i(ab)$, respectively. See Figure 2 for an illustration.
		
		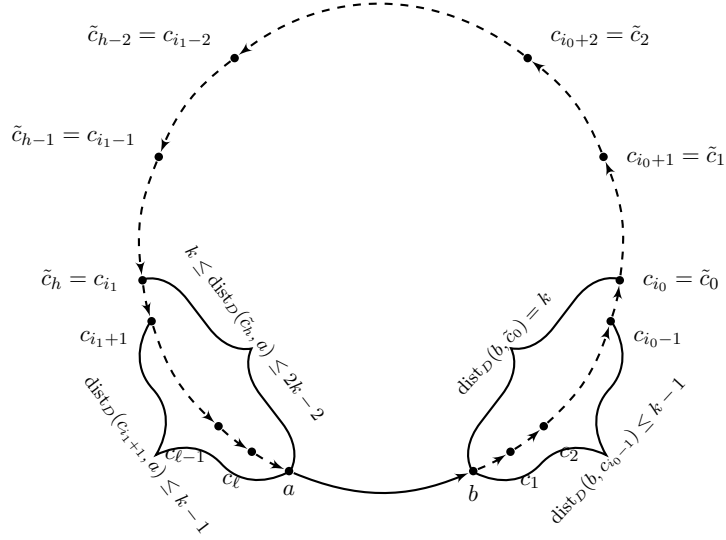
\begin{figure}[hbtp]
			\begin{center}	
				\begin{tikzpicture}[thick,scale=0.8, every node/.style={transform shape}]\label{fig_1}			
					\tikzset{vertex/.style = {circle,fill=black,minimum size=4pt, inner sep=0pt}}
					\tikzset{edge/.style = {->,> = latex'}}			
					\node[vertex, label=below:$a$] (v1) at (-112.5:4) {};
					\node[vertex, label=below:$b$] (v2) at (-67.5:4) {};
					\node[vertex, label={[label distance=0.4em]275:$c_1$}] (s1) at (-57.5:4) {};
					\node[vertex, label={[label distance=0.4em]300:$c_2$}] (s2) at (-47.5:4) {};
					\node[vertex, label={[label distance=0.4em]355:$c_{i_0-1}$}] (s3) at (-17.5:4) {};
					\node[vertex, label={[label distance=0.4em]0:$c_{i_0}=\tilde{c}_0$}] (s4) at (-7.5:4) {};
					\node[vertex, label={[label distance=0.4em]0:$c_{i_0+1}=\tilde{c}_1$}] (s5) at (22.5:4) {};
					\node[vertex, label={[label distance=0.4em]10:$c_{i_0+2}=\tilde{c}_2$}] (s6) at (52.5:4) {};
					\node[vertex, label={[label distance=0.4em]170:$\tilde{c}_{h-2}=c_{i_1-2}$}] (s7) at (127.5:4) {};
					\node[vertex, label={[label distance=0.4em]175:$\tilde{c}_{h-1}=c_{i_1-1}$}] (s8) at (157.5:4) {};
					\node[vertex, label={[label distance=0.4em]180:$\tilde{c}_h=c_{i_1}$}] (s9) at (-172.5:4) {};
					\node[vertex, label={[label distance=0.4em]185:$c_{i_1+1}$}] (s10) at (-162.5:4) {};
					\node[vertex, label={[label distance=0.4em]255:$c_{\ell-1}$}] (s11) at (-132.5:4) {};
					\node[vertex, label={[label distance=0.4em]260:$c_\ell$}] (s12) at (-122.5:4) {};			
					\draw[edge] (v1) to[out=-22.5, in=-157.5] (v2);
					\draw[edge,dashed] (v2) to[out=22.5, in=-147.5] (s1);
					\draw[edge,dashed] (s1) to[out=32.5, in=-137.5] (s2);
					\draw[edge,dashed] (s2) to[out=42.5, in=-107.5] (s3);
					\draw[edge,dashed] (s3) to[out=72.5, in=-97.5] (s4);
					\draw[edge,dashed] (s4) to[out=82.5, in=-67.5] (s5);
					\draw[edge,dashed] (s5) to[out=112.5, in=-37.5] (s6);
					\draw[edge,dashed] (s6) to[out=142.5, in=37.5] (s7);
					\draw[edge,dashed] (s7) to[out=-142.5, in=67.5] (s8);
					\draw[edge,dashed] (s8) to[out=-112.5, in=97.5] (s9);
					\draw[edge,dashed] (s9) to[out=-82.5, in=107.5] (s10);
					\draw[edge,dashed] (s10) to[out=-72.5, in=137.5] (s11);
					\draw[edge,dashed] (s11) to[out=-42.5, in=147.5] (s12);
					\draw[edge,dashed] (s12) to[out=-32.5, in=157.5] (v1);
					\draw[decorate,decoration={brace,amplitude=16pt,mirror,raise=1pt}] (3.9657794454952416445782301077143,-0.52210476888020636619362491158196)--(1.5307337294603590869138399361216,-3.6955181300451470245127327575872) node[midway,xshift=-20pt,yshift=15pt,rotate=47.5]{\footnotesize $\dist_D(b,\tilde{c}_0)=k$};
					\draw[decorate,decoration={brace,amplitude=32pt,mirror,raise=1pt}] (1.5307337294603590869138399361216,-3.6955181300451470245127327575872)--(3.814867802992907684575388258401,-1.2028231980170924864901885437243) node[midway,xshift=36pt,yshift=-26pt,rotate=47.5]{\footnotesize $\dist_D(b,c_{i_0-1})\leq k-1$};
					\draw[decorate,decoration={brace,amplitude=16pt,mirror,raise=1pt}] (-1.5307337294603590869138399361216,-3.6955181300451470245127327575872)--(-3.9657794454952416445782301077143,-0.52210476888020636619362491158196) node[midway,xshift=18pt,yshift=20pt,rotate=-52.5]{\footnotesize $k\leq\dist_D(\tilde{c}_h,a)\leq 2k-2$};
					\draw[decorate,decoration={brace,amplitude=32pt,mirror,raise=1pt}] (-3.814867802992907684575388258401,-1.2028231980170924864901885437243)--(-1.5307337294603590869138399361216,-3.6955181300451470245127327575872) node[midway,xshift=-35pt,yshift=-28pt,rotate=-52.5]{\footnotesize $\dist_D(c_{i_1+1},a)\leq k-1$};			
				\end{tikzpicture}
				\caption{The structure of $C$ in $D$. The solid arcs represent the arcs of $D$ and the dashed arcs represent the paths of $D$.}
			\end{center}
		\end{figure}
		

		
		Let $X_i=\{v\in V(D):\;v\;\text{can}\;\text{be}\;\text{reached}\;\text{from}\;\tilde{c}_i\;\text{via}\ \text{a}\;\text{path}\;\text{in}\;D-\tilde{c}_{i+1}\}$ for any $i\in [0,h]$. By definition, for each $i\in [0,h]$, we have $X_i\subseteq V(D-\tilde{c}_{i+1})$, and so $X_{i}\neq V(D)$.
		
		In the following context, we will consider the vertices $\tilde{c}_0,\tilde{c}_1,\ldots,\tilde{c}_h$ and the sets $X_0,X_1,\ldots,X_h$, and the subscripts will always modulo $h+1$.

    \begin{claim}\label{intersect_1}
			$X_i \cap V(C) = V\bigl(C[\tilde{c}_i,\tilde{c}_{i+1})\bigr)$ for each $i\in [0,h]$.
		\end{claim}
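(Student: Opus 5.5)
The plan is to prove the two inclusions separately. The ``$\supseteq$'' direction is immediate. The ``$\subseteq$'' direction reduces to a path $Q$ that leaves the segment $C[\tilde c_i,\tilde c_{i+1})$ and returns to $C$ outside it. We then kill $Q$ with one of two tools: isometry of $C$ together with the absence of a subdivision of $C(k,k)$, or the fact that $\tilde c_{i+1}$ is a $(b,a)$-cut-vertex.

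\emph{Step 1: $\supseteq$.} Since $C$ is a cycle, the path $C[\tilde c_i,\tilde c_{i+1})$ does not contain $\tilde c_{i+1}$. So every vertex on it is reached from $\tilde c_i$ in $D-\tilde c_{i+1}$, and hence lies in $X_i$. For $i=h$ the segment is $C[\tilde c_h,\tilde c_0)$, which passes through $a$ and $b$.

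\emph{Step 2: setting up $\subseteq$.} Suppose some $w\in X_i\cap V(C)$ lies outside $S:=V(C[\tilde c_i,\tilde c_{i+1}))$. Take a $(\tilde c_i,w)$-path $P$ in $D-\tilde c_{i+1}$. Let $y$ be the first vertex of $P$ on $C$ outside $S$, and let $x$ be the last vertex of $P[\tilde c_i,y]$ lying in $S$. Then $Q:=P[x,y]$ is an $(x,y)$-path whose internal vertices avoid $C$, and $y\in C(\tilde c_{i+1},\tilde c_i)$. Hence $C[x,y]$ and $Q$ are internally disjoint $(x,y)$-paths. Since $C$ is isometric, $\|Q\|\ge \dist_D(x,y)=\|C[x,y]\|$. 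Therefore, if $\|C[x,y]\|\ge k$, the union of $Q$ and $C[x,y]$ is a subdivision of $C(k,k)$, which is a contradiction. So we may assume $\|C[x,y]\|\le k-1$. As $C[x,y]$ passes through $\tilde c_{i+1}$, this forces $y$ to lie within distance $k-1$ after $\tilde c_{i+1}$ along $C$.

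\emph{Step 3: using the cut-vertex.} First let $i<h$. Then $\tilde c_{i+1}\in\{\tilde c_1,\dots,\tilde c_h\}$, and $\dist_D(\tilde c_{i+1},a)\ge \dist_D(\tilde c_h,a)\ge k$. So $y\in C(\tilde c_{i+1},a]$, while $x\in C[\tilde c_i,\tilde c_{i+1})\subseteq C[b,\tilde c_{i+1})$. The walk $C[b,x]\circ Q\circ C[y,a]$ avoids $\tilde c_{i+1}$: the two pieces of $C$ lie on opposite sides of $\tilde c_{i+1}$, and $Q$ has no internal vertex on $C$. It therefore contains a $(b,a)$-path in $D-\tilde c_{i+1}$, contradicting that $\tilde c_{i+1}=c_{i_0+i+1}$ is a $(b,a)$-cut-vertex.

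Now let $i=h$, so the relevant cut-vertex is $\tilde c_0$ and $S=C[\tilde c_h,\tilde c_0)$. If $x\in C[\tilde c_h,a]$, then $C[x,y]\supseteq C[b,\tilde c_0]$ has length at least $\dist_D(b,\tilde c_0)=k$ by the choice of $ab$ in Claim~\ref{wlog}, which was excluded in Step 2. Hence $x\in C[b,\tilde c_0)$. Also $y\in C(\tilde c_0,\tilde c_h)\subseteq C(\tilde c_0,a]$. The same walk $C[b,x]\circ Q\circ C[y,a]$ then gives a $(b,a)$-path avoiding $\tilde c_0$, again a contradiction.

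The main obstacle is the bookkeeping in Step 3: we must check that in every case the two arcs of $C$ used lie strictly on opposite sides of the cut-vertex. The wrap-around case $i=h$ is the delicate one, because $a,b\in S$ there. This is exactly where the lower bounds $\dist_D(\tilde c_h,a)\ge k$ and $\dist_D(b,\tilde c_0)=k$ from Claim~\ref{wlog} must be invoked, so that the short-jump assumption of Step 2 pins $x$ and $y$ into the right arcs.
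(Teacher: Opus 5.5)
Your proof is correct and is essentially the paper's argument. The paper also takes a path $Q$ from $C[\tilde c_i,\tilde c_{i+1})$ to the rest of $C$ that is internally disjoint from $C$, and rules it out either by isometry of $C$ together with the absence of a $C(k,k)$-subdivision, or by the detour $C[b,x]\circ Q\circ C[y,a]$ around $\tilde c_{i+1}$ (respectively $\tilde c_0$ when $i=h$). The only difference is the order: you apply the length argument first to force $\|C[x,y]\|\le k-1$ and then locate $y$ (for $i<h$) or $x$ (for $i=h$), while the paper splits directly on those positions; your explicit choice of $x$ and $y$ also fills in the detail the paper leaves implicit.
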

		\begin{proof}[Proof of Claim~\ref{intersect_1}.]	
			By definition of $X_i$, every vertex of $C[\tilde{c}_i,\tilde{c}_{i+1})$ belongs to $X_i$. Thus it suffices to prove that $X_i \cap V\bigl(C(\tilde{c}_{i+1},a]\cup C[b,\tilde{c}_i]\bigr)=\emptyset$.
			
			\vspace{0.1cm}
			\textbf{Case 1. $i\in[0,h-1]$}
			\vspace{0.1cm}
			
			Suppose to the contrary that there exists $v_1\in X_i \cap V\bigl(C(\tilde{c}_{i+1},a]\cup C[b,\tilde{c}_i]\bigr)$. By definition of $X_i$, there is a path $P_1$ from some $u_1\in C[\tilde{c}_i,\tilde{c}_{i+1})$ to $v_1$ internally disjoint from $C$.
			
			If $v_{1}\in C(\tilde{c}_{i+1},a]$, then the concatenation $C[b,u_{1}]\circ P[u_{1},v_{1}]\circ C[v_{1},a]$ is a $(b,a)$-path avoiding $\tilde{c}_{i+1}$, contradicting that $\tilde{c}_{i+1}$ is a $(b,a)$-cut-vertex.
						Otherwise, $v_{1}\in C[b,\tilde{c}_i)$, then $$\|P_{1}\|\geq \dist_{D}(u_{1},v_{1})=\|C[u_{1},v_{1}]\|\geq \|C[\tilde{c}_{h},b]\|=\dist_{D}(\tilde{c}_{h},b)\geq k+1. $$ Hence $P_{1} \cup C[u_{1},v_{1}]$ is a subdivision of $C(k,k)$, a contradiction.
			
			\vspace{0.1cm}
			\textbf{Case 2. $i=h$}
			\vspace{0.1cm}
			
			Similarly, suppose to the contrary that there exists $v_2\in X_h \cap V\bigl(C(\tilde{c}_0,\tilde{c}_h)\bigr)$. By definition of $X_h$, there exists a path $P_{2}$ from $u_{2}\in C[\tilde{c}_{h},\tilde{c}_{0})$ to $v_2$ internally disjoint from $C$.
			
			If $u_{2}\in C[b,\tilde{c}_{0})$, then the concatenation $C[b,u_{2}]\circ P[u_{2},v_{2}]\circ C[v_{2},a]$ is a $(b,a)$-path avoiding $\tilde{c}_0$, contradicting that $\tilde{c}_0$ is a $(b,a)$-cut-vertex.
						Otherwise, $u_{2}\in C[\tilde{c}_{h},a]$, then $$\|P_{2}\|\geq \dist_{D}(u_{2},v_{2})=\|C[u_{2},v_{2}]\|\geq \|C[a,\tilde{c}_{0}]\|=k+1. $$ It yields that $P_{2} \cup C[u_{2},v_{2}]$ forms a subdivision of $C(k,k)$, which also leads to a contradiction.
			
			Consequently, the claim follows.
		\end{proof}
		
		\begin{claim}\label{intersect_2}
			For any $i,j\in [0,h]$, the following hold:
			
			$(i).$ If $X_i\cap X_j\neq \emptyset$, then $j\in \{i-1,i+1\}$.
			
			$(ii).$ For $i\neq h-1$, if $X_i\cap X_{i+1}\neq \emptyset$, then $\tilde{c}_{i+1}\tilde{c}_{i+2}\in D$; if $X_{h-1}\cap X_{h}\neq \emptyset$, then $c_{i_1}c_{i_1+1}\in D$.
			
			$(iii).$ If $X_i\cap X_{i+1}\neq \emptyset$, then $v_0\in X_i\cup X_{i+1}$.
		\end{claim}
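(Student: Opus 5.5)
The three parts use the same basic fact. For every $i$, the set $X_i$ is \emph{out-closed except through $\tilde c_{i+1}$}: if $x\in X_i$ and $xy\in D$, then $y\in X_i$ or $y=\tilde c_{i+1}$. This follows directly from the definition of $X_i$ as a reachability set in $D-\tilde c_{i+1}$. I will combine this with Claim~\ref{intersect_1}, which gives $\tilde c_j\in X_i$ if and only if $\tilde c_j\in C[\tilde c_i,\tilde c_{i+1})$, that is, if and only if $j=i$. I also use that $D$ is strongly connected.

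\emph{Part (i).} Suppose $w\in X_i\cap X_j$ with $i\neq j$ and $j\notin\{i-1,i+1\}$; then $\tilde c_{i+1}\neq\tilde c_{j+1}$. By strong connectivity there is a shortest path $Q$ from $w$ to the set $\{\tilde c_{i+1},\tilde c_{j+1}\}$. All vertices of $Q$ before its terminal vertex avoid both $\tilde c_{i+1}$ and $\tilde c_{j+1}$. Applying the out-closure property of $X_i$ and of $X_j$ along $Q$, they therefore all lie in $X_i\cap X_j$. If $\term(Q)=\tilde c_{i+1}$, then its predecessor is in $X_j$ and $\tilde c_{i+1}\neq \tilde c_{j+1}$, so $\tilde c_{i+1}\in X_j$. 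By Claim~\ref{intersect_1} this forces $i+1=j$, which is excluded. Symmetrically, $\term(Q)=\tilde c_{j+1}$ forces $j+1=i$. Either way we get a contradiction, which proves (i).

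\emph{Parts (ii) and (iii).} Set $Y=X_i\cap X_{i+1}\neq\emptyset$. Running the same argument with $j=i+1$ shows that any shortest path from $Y$ to $\{\tilde c_{i+1},\tilde c_{i+2}\}$ ends at $\tilde c_{i+1}$, since ending at $\tilde c_{i+2}$ would put $\tilde c_{i+2}\in X_i$. It follows that every out-arc of a vertex of $Y$ stays in $Y\cup\{\tilde c_{i+1}\}$. So $Y$ is a set with all out-arcs leaving it going to the single vertex $\tilde c_{i+1}$, and $\tilde c_{i+1}$ reaches $Y$ inside $X_{i+1}$.

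For (iii), assume $v_0\notin X_i\cup X_{i+1}$. Then every vertex of $Y$ has out-degree exactly $2$ into $Y\cup\{\tilde c_{i+1}\}$. I would take a shortest path $R$ from $\tilde c_{i+1}$ into $Y$ inside $X_{i+1}$ and aim to build a smaller member of $\mathcal D$ on $Y$ together with a suitable part of $R$, with $\tilde c_{i+1}$ (or a suitable vertex of $R$) playing the role of $v_0$. This digraph is a subdigraph of $D$, so it has girth at least $4k+2$ and no subdivision of $C(k,k)$. It is strictly smaller than $D$ because it avoids $C(\tilde c_{i+2},\tilde c_i)\neq\emptyset$. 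That contradicts the minimality of $D$.

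\emph{Main obstacle.} The delicate point is the out-degree of the internal vertices of $R$, which may lie in $X_{i+1}\setminus X_i$ and send arcs outside. I would first try to show that the out-closure forces $R$ to be a single arc, so that $\tilde c_{i+1}$ has an out-neighbour in $Y$. Since $d^+(\tilde c_{i+1})=2$ and one out-arc is its successor on $C$, this is where (ii) should come from. If the successor of $\tilde c_{i+1}$ on $C$ were not $\tilde c_{i+2}$, then that successor would lie in $X_{i+1}\setminus Y$. In that case both out-arcs of $\tilde c_{i+1}$ cannot serve as described, and I would look for a contradiction either with Claim~\ref{no cut} or with the girth bound, using the isometry of $C$ together with Claim~\ref{dis}.

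The boundary case $i=h-1$ differs because $\tilde c_h$ and $\tilde c_0$ are separated by the non-$\tilde c$ cut vertices $c_{i_1+1},\dots,c_\ell$. For this case I would redo the argument with $c_{i_1+1}$ in place of $\tilde c_{i+2}$, which yields the arc $c_{i_1}c_{i_1+1}$.
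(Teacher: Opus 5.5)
\paragraph{Verdict.} Part (i) is correct, but parts (ii) and (iii) have a genuine gap: they rest on $p\in Y$, which your proposal never establishes.

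\paragraph{What works.} Part (i) is essentially the paper's argument. Your observation that $Y=X_i\cap X_{i+1}$ sends all its out-arcs into $Y\cup\{\tilde c_{i+1}\}$ is also correct.

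Let $p$ be the non-$C$ out-neighbour of $\tilde c_{i+1}$ and $q$ its $C$-successor. You are right that $p\in Y$ would give both remaining parts at once:
\begin{itemize}
\item For (ii): every $(\tilde c_{i+1},\tilde c_{i+2})$-path avoiding $q$ would start with the arc $\tilde c_{i+1}p$ and could leave $Y$ only through $\tilde c_{i+1}$. So $q$ would contradict Claim~\ref{no cut}.
\item For (iii): if $v_0\notin X_i\cup X_{i+1}$, then $D[Y\cup\{\tilde c_{i+1}\}]$ would be a smaller member of $\mathcal D$.
\end{itemize}

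\paragraph{The gap.} Nothing in your proposal forces $p\in Y$, i.e.\ $\|R\|=1$.
\begin{itemize}
\item Out-closure of $Y$ controls arcs leaving $Y$, not arcs entering it.
\item The internal vertices of $R$ lie in $X_{i+1}\setminus X_i$. The only constraint available compares $R$ with $C[\tilde c_{i+1},\tilde c_i]\circ Q$, where $Q$ is a path from $\tilde c_i$ to $\term(R)$ in $D-\tilde c_{i+1}$. That comparison yields $\|R\|\le k-1$, not $\|R\|=1$.
\item You also never exclude that $R$ first runs along $C$ through $q$. In that case even this comparison is unavailable.
\end{itemize}
At exactly this point your text says only that you ``would look for a contradiction''. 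So (ii) and (iii) remain unproved.

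\paragraph{What the paper does instead.} It proves a weaker fact that suffices for both parts, in three steps.
\begin{enumerate}
\item[(a)] Take $u\in Y$ with $u\tilde c_{i+1}\in D$, and a $(\tilde c_{i+1},u)$-path $P$ in $D-\tilde c_{i+2}$. Isometry of $C$ and $g(D)\ge 4k+2$ force $P$ to leave $C$ at $\tilde c_{i+1}$ itself. Otherwise the last vertex $x$ of $P$ on $C$ yields two long $(x,\tilde c_{i+1})$-paths, namely $C[x,\tilde c_{i+1}]$ and $P[x,u]\circ u\tilde c_{i+1}$. Hence $C'=P\circ u\tilde c_{i+1}$ is a cycle with $V(C)\cap V(C')=\{\tilde c_{i+1}\}$.
\item[(b)] Let $w$ be the first vertex of $X_i$ on $C'$. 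Then $\|C'[\tilde c_{i+1},w]\|\le k-1$; this is the analogue of your bound on $\|R\|$. Hence $\|C'[w,\tilde c_{i+1}]\|\ge 3k+3$.
\item[(c)] There is no path in $D-\tilde c_{i+1}$ from $V(C')$ to $V(C)$. Take a minimal such path $S$, from $y$ to $z$.
\begin{itemize}
\item Claim~\ref{intersect_1} puts $z$ in $C(\tilde c_{i+1},\tilde c_{i+2}]$.
\item A path from $\tilde c_i$ to $w$ in $D-\tilde c_{i+1}$ forces $y$ to precede $w$ on $C'$, since otherwise $z\in X_i$.
\item Then $C'[y,u]\circ u\tilde c_{i+1}$ and $S\circ C[z,\tilde c_{i+1}]$ form a subdivision of $C(k,k)$.
\end{itemize}
\end{enumerate}

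\paragraph{How (c) finishes the claim.} Fact (c) is the substitute for your hoped-for $p\in Y$.
\begin{itemize}
\item It makes $q$ a $(\tilde c_{i+1},\tilde c_{i+2})$-cut-vertex whenever $q\neq\tilde c_{i+2}$. This gives (ii), with $c_{i_1+1}$ in place of $\tilde c_{i+2}$ when $i=h-1$.
\item Let $U$ be the set of vertices reachable from $\tilde c_i$ without using the arc $\tilde c_{i+1}q$. Fact (c) shows $U\subseteq X_i\cup X_{i+1}$, so $D[U]$ is a smaller counterexample unless $v_0\in X_i\cup X_{i+1}$. This gives (iii).
\end{itemize}
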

		
		\begin{proof}[Proof of Claim~\ref{intersect_2}.]
			$(i).$ Assume that $X_i\cap X_j\neq \emptyset$ for some $i,j\in [0,h]$. Observe that $X_i\cap X_j\neq V(D)$. Since $D$ is strongly connected, there must be an arc $uv\in D$ with $u\in X_i\cap X_j$ and $v\notin X_i\cap X_j$. We will divide the discussion into the following three cases.
			
			\vspace{0.1cm}
			\noindent\textbf{Case 1. $v\notin X_i\cup X_j$}
			\vspace{0.1cm}
			
			By the definition of $X_i$, there exists a $(\tilde{c}_{i},u)$-path $P$ in $D-\tilde{c}_{i+1}$. One can see that $P\circ uv$ is a $(\tilde{c}_{i},v)$-path in $D$. Since $v\notin X_i$, we must have $v=\tilde{c}_{i+1}$. Similarly, we can also deduce that $v=\tilde{c}_{j+1}$. But this is impossible.
			
			\vspace{0.1cm}
			\noindent\textbf{Case 2. $v\in X_i\setminus X_j$}
			\vspace{0.1cm}
			
			Due to $v\notin X_j$, we obtain $v=\tilde{c}_{j+1}$. By Claim~\ref{intersect_1}, we know that $\tilde{c}_{j+1}\in C[\tilde{c}_{i},\tilde{c}_{i+1})$ because $\tilde{c}_{j+1}\in X_i$. It follows that $j=i-1$.
			
			\vspace{0.1cm}
			\noindent\textbf{Case 3. $v\in X_j\setminus X_i$}
			\vspace{0.1cm}
			
			By analogous arguments, we obtain that $v=\tilde{c}_{i+1}$ and $\tilde{c}_{i+1}\in C[\tilde{c}_{j},\tilde{c}_{j+1})$, yielding that $j=i+1$. This proves $(i)$.

			Symmetrically, we can set $j=i+1$ and continue to use the notions in $(i)$.
			
			$(ii).$ Now we have $u\in X_i\cap X_{i+1}$ and $v=\tilde{c}_{i+1}$. By the definition of $X_{i+1}$, there exists a $(\tilde{c}_{i+1},u)$-path $P$. Let $x$ be the last vertex of $C[\tilde{c}_{i+1},\tilde{c}_{i+2})$ along $P$. Clearly, $P[x,u]$ is disjoint with $C$ except its ends. If $x\neq \tilde{c}_{i+1}$, then we get $\|C[x,\tilde{c}_{i+1}]\|\geq 3k+3$ since $\|C[\tilde{c}_{i+1},x]\|\leq \|C[\tilde{c}_{i+1},\tilde{c}_{i+2}]\|=\dist_D(\tilde{c}_{i+1},\tilde{c}_{i+2})\leq k-1$ and $g(D)\geq 4k+2$. Therefore, the union of $C[x,\tilde{c}_{i+1}]$ and $P[x,u]\circ u\tilde{c}_{i+1}$ (since $C$ is isometric, $\|P[x,u]\circ u\tilde{c}_{i+1}\| \geq \|C[x,\tilde{c}_{i+1}]\|$) forms a subdivision of $C(k,k)$, a contradiction. This implies that $x=\tilde{c}_{i+1}$ and it indicates that $P\circ u\tilde{c}_{i+1}$ is a cycle $C'$ satisfying that $V(C)\cap V(C')=\{\tilde{c}_{i+1}\}$\;(see Figure~\ref{fig_2} for an illustration).
			
			\begin{figure}[hbtp]
				\begin{center}	
					\begin{tikzpicture}[thick,scale=1, every node/.style={transform shape}]\label{fig_2}			
						\tikzset{vertex/.style = {circle,fill=black,minimum size=4pt, inner sep=0pt}}
						\tikzset{edge/.style = {->,> = latex'}}				
						\node[vertex, label=below:$a$] (v1) at (-112.5:4) {};
						\node[vertex, label=below:$b$] (v2) at (-67.5:4) {};
						\node[vertex, label={[label distance=0.4em]275:$c_1$}] (s1) at (-57.5:4) {};
						\node[vertex, label={[label distance=0.4em]300:$c_2$}] (s2) at (-47.5:4) {};
						\node[vertex, label={[label distance=0.4em]355:$c_{i_0-1}$}] (s3) at (-17.5:4) {};
						\node[vertex, label={[label distance=0.4em]0:$c_{i_0}=\tilde{c}_0$}] (s4) at (-7.5:4) {};
						\node[vertex, label={[label distance=0.4em]10:$\tilde{c}_i$}] (s5) at (52.5:4) {};
						\node[vertex, label={[label distance=0.4em]170:$\tilde{c}_{i+1}$}] (s6) at (117.5:4) {};			
						\node[vertex, label={[label distance=0.4em]170:$q$}] (s7) at (127.5:4) {};
						\node[vertex, label={[label distance=0.4em]170:$\tilde{c}_{i+2}$}] (s75) at (147.5:4) {};
						\node[vertex, label={[label distance=0.4em]175:$\tilde{c}_{h-1}=c_{i_1-1}$}] (s8) at (157.5:4) {};
						\node[vertex, label={[label distance=0.4em]180:$\tilde{c}_h=c_{i_1}$}] (s9) at (-172.5:4) {};
						\node[vertex, label={[label distance=0.4em]185:$c_{i_1+1}$}] (s10) at (-162.5:4) {};
						\node[vertex, label={[label distance=0.4em]255:$c_{\ell-1}$}] (s11) at (-132.5:4) {};
						\node[vertex, label={[label distance=0.4em]260:$c_\ell$}] (s12) at (-122.5:4) {};				
						\draw[edge] (v1) to[out=-22.5, in=-157.5] (v2);
						\draw[edge,dashed] (v2) to[out=22.5, in=-147.5] (s1);
						\draw[edge,dashed] (s1) to[out=32.5, in=-137.5] (s2);
						\draw[edge,dashed] (s2) to[out=42.5, in=-107.5] (s3);
						\draw[edge,dashed] (s3) to[out=72.5, in=-97.5] (s4);
						\draw[edge,dashed] (s4) to[out=82.5, in=-37.5] (s5);
						\draw[edge,dashed] (s5) to[out=142.5, in=27.5] (s6);
						\draw[edge] (s6) to[out=-152.5, in=37.5] (s7);
						\draw[edge,dashed] (s7) to[out=-142.5, in=57.5] (s75);
						\draw[edge,dashed] (s75) to[out=-122.5, in=67.5] (s8);
						\draw[edge,dashed] (s8) to[out=-112.5, in=97.5] (s9);
						\draw[edge] (s9) to[out=-82.5, in=107.5] (s10);
						\draw[edge,dashed] (s10) to[out=-72.5, in=137.5] (s11);
						\draw[edge,dashed] (s11) to[out=-42.5, in=147.5] (s12);
						\draw[edge,dashed] (s12) to[out=-32.5, in=157.5] (v1);				
						\draw[draw=red, thick] (-3.9657794454952416445782301077143,-0.52210476888020636619362491158196)--(-3.9657794454952416445782301077143,-4.5)--(3.9657794454952416445782301077143,-4.5)--(3.9657794454952416445782301077143,-0.7)--cycle;
						\node[red,font=\huge] at (-90:2){$X_h$};
						\draw[draw=red, thick] (3.5,-0.52210476888020636619362491158196)--(4.5,-0.52210476888020636619362491158196)--(4.5,0.52210476888020636619362491158196)--(3.5,0.52210476888020636619362491158196)--cycle;
						\node[red] at (0:4){$X_0$};
						\draw[draw=red, thick] (2.2497566339028876421292519346754,2.681155550916423123208374276944)--(2.8925442435894269684518953445827,3.4471999940354011584107669274994)--(3.6585886867084050036542879951381,2.8044123843488618320881235175921)--(3.0158010770218656773316445852308,2.0383679412298837968857308670367)--cycle;
						\node[red] at (43:4){$X_{i-1}$};
						\draw[draw=red, thick] (-4.1,1.5307337294603590869138399361216)--(-2,1.5307337294603590869138399361216)--(-2,-1.2)--(-4.1,-0.2)--cycle;
						\node[red] at (175:3){$X_{h-1}$};
						\node at (-2.4,-0.8){$v_0$};				
						\begin{scope}[shift={(117.5:3)}]
							\node[vertex, label={[label distance=0.1em]200:$p$}] (p1) at (180:1) {};
							\node[vertex, label={[label distance=0.1em]200:$w$}] (p15) at (230:1) {};					
							\node[vertex, label={[label distance=0.1em]0:$u$}] (p2) at (50:1) {};
							\draw[edge] (s6) to[out=-152.5, in=90] (p1);
							\draw[edge] (p2) to[out=140, in=15.5] (s6);
							\draw[thick,edge,dashed] (180:1) arc[start angle=180, end angle=230, radius=1];
							\draw[thick,edge,dashed] (230:1) arc[start angle=230, end angle=410, radius=1];
						\end{scope}				
						\node[font=\huge] at (117.5:3){$C'$};			
					\end{tikzpicture}
					\caption{The structure of $C$ in $D$. The solid arcs represent the arcs of $D$ and the dashed arcs represent the directed paths of $D$. We demonstrate the situation $X_{h-1}\cap X_h\neq \emptyset$ in this figure.}
				\end{center}
			\end{figure}
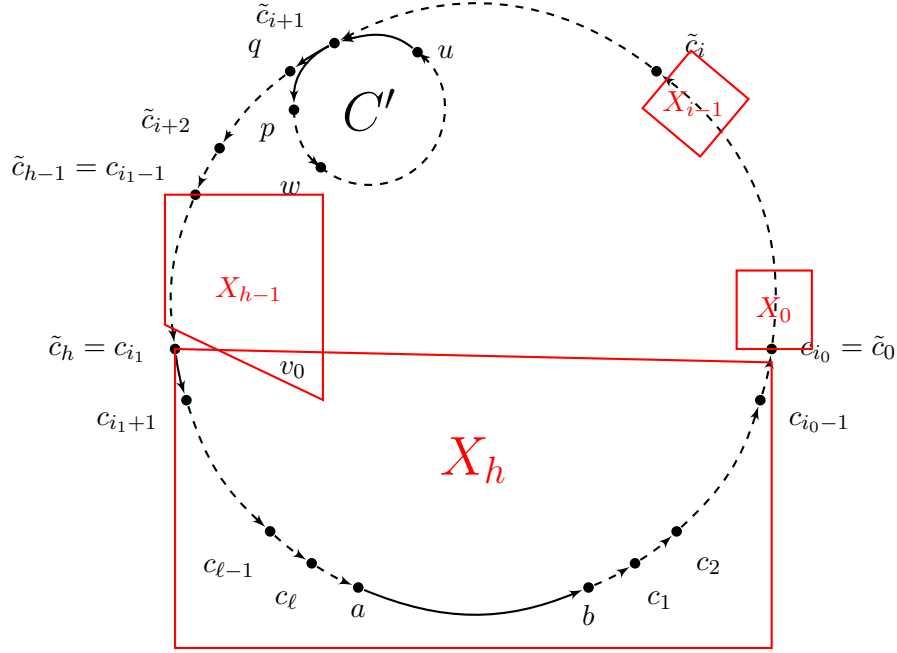
			
			Let $w$ be the first vertex of $X_i$ along $C'$ from $\tilde{c}_{i+1}$ to $u$. Then $w\in C'(\tilde{c}_{i+1},u]$. Let $Q$ be a shortest $(\tilde{c}_{i},w)$-path in $D-\tilde{c}_{i+1}$. By the definition of $X_i$, we have $V(Q)\subseteq X_i$, and by Claim~\ref{intersect_1}, 
            we deduce that $Q$ is disjoint with $C[\tilde{c}_{i+1},\tilde{c}_{i})$. Moreover, $Q$ intersects $C'[\tilde{c}_{i+1},w]$ only at $w$ since otherwise this will contradicts the choice of $w$. If $\|C'[\tilde{c}_{i+1},w]\|\geq k$, then the union of $C[\tilde{c}_{i+1},\tilde{c}_{i}]\circ Q$ and $C'[\tilde{c}_{i+1},w]$ forms a subdivision of $C(k,k)$, a contradiction. Thereby, $\|C'[\tilde{c}_{i+1},w]\|\leq k-1$ and $\|C'[w,\tilde{c}_{i+1}]\|\geq 3k+3$ since $g(D)\ge 4k+2$.
			
			We shall illustrate that there exists no path in $D-\tilde{c}_{i+1}$ with initial vertex in $V(C')$ and terminal vertex in $V(C)$. Suppose, on the contrary, that the statement is false. Arbitrarily select a path $R$ internally disjoint from $C$ and $C'$ and denote $y=\init(R)$ and $z=\term(R)$. Since $C'[\tilde{c}_{i+1},y]\circ R[y,z]$ is a path from $\tilde{c}_{i+1}$ to $z$ which shares no common vertex of $C$ except its ends, Claim~\ref{intersect_1} implies that $z\in C(\tilde{c}_{i+1},\tilde{c}_{i+2}]$. In addition, we will show that $y\in C'[\tilde{c}_{i+1},w)$. If not, then $Q[\tilde{c}_i,w]\circ C'[w,y]\circ R[y,z]$ is a $(\tilde{c}_i,z)$-path avoiding $\tilde{c}_{i+1}$. It follows that $z\in X_i$, which contradicts Claim~\ref{intersect_1}. This proves $y\in C'[\tilde{c}_{i+1},w)$. Since $\|C'[w,\tilde{c}_{i+1}]\|\geq 3k+3$, the union of $C'[y,u]\circ u\tilde{c}_{i+1}$ and $R[y,z]\circ C[z,\tilde{c}_{i+1}]$ forms a subdivision of $C(k,k)$, a contradiction.
			
			For $i\neq h-1$, if $\tilde{c}_{i+1}\tilde{c}_{i+2}\notin D$, then there exists a vertex $q\in C(\tilde{c}_{i+1},\tilde{c}_{i+2})$ such that $\tilde{c}_{i+1}q\in C$. Since $d^+_{D}(\tilde{c}_{i+1})=2$, there is exactly one other out-neighbor of $\tilde{c}_{i+1}$, denote  by $p$, and note that $p\in C'$. Take all $\tilde{c}_{i+1}$ to $\tilde{c}_{i+2}$ paths in $D-q$. Evidently, every such $\tilde{c}_{i+1}$ to $\tilde{c}_{i+2}$ path must pass through $p$. Consequently, each of them contains a segment from $C'$ to $C$ in $D-\tilde{c}_{i+1}$, contradicting that there exists no path in $D-\tilde{c}_{i+1}$ with initial vertex in $V(C')$ and terminal vertex in $V(C)$. Therefore, there is no $(\tilde{c}_{i+1},\tilde{c}_{i+2})$-path in $D-q$. This yields that $q$ is a $(\tilde{c}_{i+1},\tilde{c}_{i+2})$-cut-vertex, which contradicts Claim~\ref{no cut}.
			
			We next consider $i=h-1$ and suppose to the contrary that $c_{i_1}c_{i_1+1}\notin D$. Similarly, let $q\in C(c_{i_1},c_{i_1+1})$ be a vertex such that $c_{i_1}q\in C$ and $p\in C'$ be another out-neighbor of $c_{i_1}$. If there exists a $(c_{i_1},c_{i_1+1})$-path in $D-q$, then such path must pass through $p$, yielding a  path with initial vertex in $V(C')$ and terminal vertex in $V(C)$ that avoids $c_{i_1}$, which is a contradiction. Thus $q$ is a $(c_{i_1},c_{i_1+1})$-cut-vertex. But it contradicts Claim~\ref{no cut}, this proves $(ii)$.
			
			
			(iii). We retain the notation of $p$ and $q$ from (ii). Let $$U=\{v\in D: \text{there exists a $(\tilde{c}_{i},v)$-path  in $D-\tilde{c}_{i+1}q$}\}.$$ 
			For any $s\in U$, by definition, there exists a $(\tilde{c}_{i},s)$-path $P'$ that does not pass through $\tilde{c}_{i+1}q$. If $\tilde{c}_{i+1}\notin P'$, then $s\in X_i$. If $\tilde{c}_{i+1} \in P'$ and $\tilde{c}_{i+2}\notin P'$, then $s\in X_{i+1}$. If both $\tilde{c}_{i+1}$ and $\tilde{c}_{i+2}$ belong to $P'$, then $P'$ must first pass through $\tilde{c}_{i+1}$ and then $\tilde{c}_{i+2}$ because, otherwise, $\tilde{c}_{i+2}\in X_i$, contradicting Claim~\ref{intersect_1}. As $P'$ does not pass through $\tilde{c}_{i+1}q$, it must pass through $p$. This indicates that $P'$ contains a subpath in $D-\tilde{c}_{i+1}$ with initial vertex in $V(C')$ and terminal vertex in $V(C)$, which leads to a contradiction. Consequently, $U\subseteq X_i\cup X_{i+1}$.
			
			We suppose that $v_0\notin X_i\cup X_{i+1}$. It is easily seen that $g(D[U])\geq g(D)$, and $d^+_{D[U]}(\tilde{c}_{i+1})=1$ and every other vertex has out-degree two. This means that $D[U]$ is a smaller counterexample, which is a contradiction. Therefore, this proves $(iii)$.
		\end{proof}
		
		Now we would like to estimate the upper bound of $\|C\|$ as below:
		\begin{align*}
			\|C\|&= \dist_D(b,\tilde{c}_{0})+\sum\limits_{i=0}^{h-1} \dist_D(\tilde{c}_{i},\tilde{c}_{i+1})+\dist_D(\tilde{c}_{h},a)+1\\
			&\leq k+h(k-1)+(2k-2)+1\\
			&=(h+3)k-(h+1).
		\end{align*}
		
		\noindent{\bf Remark (A)}: By Claim~\ref{intersect_2} $(ii)$, we know that $\tilde{c}_{i+1}\tilde{c}_{i+2}\in D$ for each $i\neq h-1$ with $X_i\cap X_{i+1}\neq \emptyset$, thus reducing the upper bound estimation of $\dist_D(\tilde{c}_{i+1},\tilde{c}_{i+2})$ from at most $k-1$ to $1$. Similarly, for $X_{h-1}\cap X_h\neq \emptyset$, we have $\tilde{c_{h}}c_{i_1+1}\in D$, which implies $\dist_D(\tilde{c}_{h},a)=k$ as $\dist_D(c_{i_1+1},a)\leq k-1$ and $\dist_D(\tilde{c_{h}},a)\geq k$. That is, the upper bound of $\|C\|$ can be further improved .
		
		Let $A$ be a largest connected component of $D-C$, i.e., $|A|=\rho(C)$.
		\begin{claim}\label{main}
			There exists a $\lambda\in [0,h]$ satisfying that
			
			$(i).$ $v_0\notin X_{\lambda}$;
			
			$(ii).$ $A\cap X_{\lambda}=\emptyset$;
			
			$(iii).$ $X_{\mu}\cap X_{\lambda}=\emptyset$ for all $\mu\neq\lambda$.
		\end{claim}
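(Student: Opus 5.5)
We will obtain $\lambda$ by counting. Call an index $i\in[0,h]$ \emph{bad} if it violates one of (i)--(iii). We will show that at most a bounded number of indices are bad, and that the length bound on $\|C\|$ (with Remark (A)) forces $h$ to be large enough that some index stays good. The three conditions are handled separately.

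\emph{Condition (iii) and (i).} By Claim~\ref{intersect_2}$(i)$, $X_\mu\cap X_\lambda\neq\emptyset$ with $\mu\neq\lambda$ forces $\mu=\lambda\pm1$. Call a pair $\{i,i+1\}$ \emph{linked} if $X_i\cap X_{i+1}\neq\emptyset$. By Claim~\ref{intersect_2}$(iii)$, every linked pair satisfies $v_0\in X_i\cup X_{i+1}$. By Claim~\ref{intersect_2}$(i)$ again, $v_0$ lies in at most two sets $X_j$, and these are consecutive, say $X_{j_0}$ and possibly $X_{j_0+1}$. So every linked pair meets $\{j_0,j_0+1\}$. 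Hence there are at most three linked pairs, and all indices violating (i) or (iii) lie in a window $W=\{j_0-1,j_0,j_0+1,j_0+2\}$ of at most four consecutive indices.

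\emph{Condition (ii).} Here I would show that $A$ meets at most two $X_i$'s, and that these are consecutive. First, each $X_i$ is closed under out-neighbours except for $\tilde c_{i+1}$: any arc leaving $X_i$ ends at $\tilde c_{i+1}$, as in Case~1 of Claim~\ref{intersect_2}$(i)$. Since $A$ is a connected component of $D-C$, its vertices are joined to $C$ only through arcs into and out of $C$. By Claim~\ref{intersect_1}, a vertex of $A\cap X_i$ reaches $C$ only inside $C[\tilde c_i,\tilde c_{i+1}]$. Now suppose $A$ met $X_i$ and $X_j$ with $j\notin\{i-1,i,i+1\}$. I would take an underlying path inside $A$ from $A\cap X_i$ to $A\cap X_j$ and look at the first arc entering $X_i$ or leaving it. Combined with strong connectivity and the girth/isometry argument used in Claim~\ref{wlog}, this should produce either a $(b,a)$-path avoiding some $\tilde c_m$ or two internally disjoint long paths, that is, a subdivision of $C(k,k)$. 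If this direct argument fails, I would fall back on the maximality of $\rho(C)$. The idea is to reroute $C$ through $C'$-type cycles, as in the proof of Claim~\ref{intersect_2}$(ii)$, to get an isometric cycle whose complement contains $A$ together with extra vertices, contradicting the choice of $C$. Either way, the indices violating (ii) lie in a set of at most two (or a few) consecutive indices.

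\emph{Counting.} If every index were bad, then $h+1$ would be at most about $6$. I would then sharpen the estimate $\|C\|\le (h+3)k-(h+1)$ using Remark (A): each linked pair replaces a term $k-1$ by $1$, or replaces $\dist_D(\tilde c_h,a)\le 2k-2$ by exactly $k$. When all indices are bad, the linked pairs are forced to be numerous relative to $h$. This should push $\|C\|$ below $g(D)\ge 4k+2$, a contradiction. I would verify the small cases $h\le 5$ one by one against $\|C\|\ge 4k+2$.

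The main obstacle is condition (ii): controlling how the component $A$ interacts with the sets $X_i$. This is where the maximality of $\rho(C)$ must enter, and the precise bound obtained there decides whether the final counting closes.
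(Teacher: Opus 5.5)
\textbf{There is a genuine gap in your handling of condition (ii).}

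Your treatment of (i) and (iii) is sound. The window argument from Claim~\ref{intersect_2}$(i)$,$(iii)$ says that all linked pairs meet the at most two consecutive indices $j$ with $v_0\in X_j$, so they form one run of at most four indices. This is a uniform version of what the paper does case by case. Using Remark~(A) to bound $\|C\|$ is also the paper's mechanism.

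Nothing about (ii) is actually proved. The ``direct argument'' and the fallback of rerouting $C$ are only gestures. The maximality of $\rho(C)$ plays no role in this claim; the paper uses it only afterwards, for the final contradiction with the isometric cycle in $D[S]$. More seriously, your target, that $A$ meets at most two consecutive $X_i$'s, is too weak for the counting to close. Take $h=2$ with $X_0,X_1,X_2$ pairwise disjoint and $v_0\in X_0$. If $A$ meets both $X_1$ and $X_2$, no admissible $\lambda$ is left. The length bound $\|C\|\le 5k-3$ does not exclude $h=2$ once $k\ge 5$. Similarly, for $h=3$ with one linked pair, the two free indices are consecutive. So no bound of the form ``a few consecutive indices'' suffices.

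The missing idea follows at once from the out-closure property you already noted. If $X_i\cap X_j=\emptyset$, there is no arc between $X_i\setminus C$ and $X_j\setminus C$. Indeed, an arc $xy$ with $x\in X_i$ and $y\notin C$ forces $y\in X_i$, since $y\neq\tilde c_{i+1}$. Every vertex lies in some $X_i$: take the last $\tilde c_i$ on a path from $\tilde c_0$ to it. Sets from different maximal runs of linked indices are disjoint by Claim~\ref{intersect_2}$(i)$. Hence the component $A$ of $D-C$ lies inside the union of a \emph{single} such run.

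Let $R$ be the run containing all $m$ linked pairs; for $m=0$ let $R=\{j_0\}$ with $v_0\in X_{j_0}$. Then $A$ meets at most one index outside $R$, and every index outside $R$ satisfies (i) and (iii). So you only need $h-m\ge 2$.

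Remark~(A) gives $\|C\|\le (h+3)k-(h+1)-m(k-2)$. When $m\ge h-1$ and $h\le 4$, this is at most $4k+h-3<4k+2$, a contradiction. For $h\ge 5$ you get $m\le 3\le h-2$ for free. This is precisely the paper's statement (\ref{stat_2}) followed by its Cases 1--4.
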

		
		\begin{proof}[Proof of Claim~\ref{main}.]
			Since $g(D)\geq 4k+2$, we obtain that $h\geq 2$. We first show the following statement:
			\begin{equation}\label{stat_2}		
				\text{If $X_i\cap X_j=\emptyset$, then there is no arc between $X_i\setminus C$ and $X_j\setminus C$.}	
			\end{equation}
						Suppose the statement is not correct. We may assume without loss of generality that there exists an arc $v_iv_j$ with $v_i \in X_i\setminus C$ and $v_j \in X_j\setminus C$. By the definition of $X_i$, there exists a $(\tilde{c}_{i},v_i)$-path in $D-\tilde{c}_{i+1}$. Together with $v_iv_j$ we find a $(\tilde{c}_{i},v_j)$-path in $D-\tilde{c}_{i+1}$. It follows that $v_j\in X_i$, which contradicts that $X_i\cap X_j=\emptyset$.
			
			Here to proceed with the proof, we will consider the following four cases.
			
			\vspace{0.1cm}
			\noindent\textbf{Case 1. $h=2$}
			\vspace{0.1cm}
			
			The sets $X_0, X_1, X_2$ must be pairwise disjoint. Otherwise, from Remark (A)
            we can derive that $\|C\|\leq (h+3)k-(h+1)-(k-2)=(5k-3)-(k-2)=4k-1<g(D)$, leading to a contradiction. By (\ref{stat_2}), there is no arc between every pair of $X_0\setminus C$, $X_1\setminus C$ and $X_2\setminus C$. Let $A \subseteq X_{\alpha} \setminus C$ and $v_0 \in X_{\beta}$, where $\alpha,\beta\in [0,2]$. We can take another integer different from $\alpha$ and $\beta$ in $[0, 2]$ to be $\lambda$.
			
			\vspace{0.1cm}
			\noindent\textbf{Case 2. $h=3$}
			\vspace{0.1cm}
			
			If $X_0, X_1,X_2, X_3$ are pairwise disjoint, then by using analogous discussion as \textbf{Case 1} we can find a desired $\lambda$. Next, we take the case that $X_0, X_1,X_2, X_3$ are not pairwise disjoint. We assert that at most one pair of $i,j\in[0,3]$ satisfies that $X_i\cap X_j\neq \emptyset$. Suppose not, there exist at least two such pairs. By Remark (A), we have $\|C\|\leq (h+3)k-(h+1)-2(k-2)=(6k-4)-2(k-2)=4k<g(D)$, leading to a contradiction. This proves the statement and we can assume that $X_\alpha\cap X_{\alpha+1}\neq \emptyset$ for some $\alpha$.
			
			Observe that $X_\alpha\cup X_{\alpha+1}$, $X_{\alpha+2}$ and $X_{\alpha+3}$ are pairwise disjoint. By (\ref{stat_2}), there exists no arc between any pair of $(X_\alpha\cup X_{\alpha+1})\setminus C$, $X_{\alpha+2}\setminus C$ and $X_{\alpha+3}\setminus C$. Thus, $A$ must lie within one of these three sets. 
			
			By Claim~\ref{intersect_2} $(iii)$, one sees that $v_0\in X_\alpha\cup X_{\alpha+1}$. Moreover, for $\beta\in\{\alpha+2,\alpha+3\}$, we have $X_\beta\cap (X_\alpha\cup X_{\alpha+1})=(X_\beta\cap X_\alpha)\cup (X_\beta\cap X_{\alpha+1})=\emptyset$, which yields that $v_0\notin X_\beta$. Now we can select $\lambda\in \{\alpha+2,\alpha+3\}$ with $A\cap X_{\lambda}=\emptyset$. 
			
			\vspace{0.1cm}
			\noindent\textbf{Case 3. $h=4$}
			\vspace{0.1cm}
			
			If $X_0, X_1,\ldots, X_4$ are pairwise disjoint or there exists exactly one pair of $i,j\in [0,4]$ such that $X_i\cap X_j\neq \emptyset$, then we can find a desired $\lambda$ similar to the previous arguments. We claim that there are exactly two pairs of $i,j\in[0,4]$ satisfying that $X_i\cap X_j\neq \emptyset$. Suppose to the contrary that there exist at least three such pairs, one can deduce that $|C|\leq (h+3)k-(h+1)-3(k-2)=(7k-5)-3(k-2)=4k+1<g(D)$, which is a contradiction. Let $\alpha,\beta \in [0,4]$ be integers with $X_\alpha\cap X_{\alpha+1}\neq \emptyset$ and $X_\beta\cap X_{\beta+1}\neq \emptyset$.
			
			By Claim~\ref{intersect_2} $(iii)$, we get $v_0\in X_\alpha\cup X_{\alpha+1}$ and $v_0\in X_\beta\cup X_{\beta+1}$. If $\beta\notin \{\alpha-1,\alpha+1\}$, then $v_0\in (X_\alpha\cup X_{\alpha+1})\cap (X_\beta\cup X_{\beta+1})=(X_\alpha\cap X_\beta)\cup (X_\alpha\cap X_{\beta+1})\cup (X_{\alpha+1}\cap X_\beta)\cup (X_{\alpha+1}\cap X_{\beta+1})=\emptyset$, which is impossible. By symmetry, we may assume $\beta=\alpha+1$. Additionally, for $\gamma\in\{\alpha+3,\alpha+4\}$, we have $X_\gamma\cap (X_\alpha\cup X_{\alpha+1}\cup X_{\alpha+2})=(X_\gamma\cap X_\alpha)\cup (X_\gamma\cap X_{\alpha+1})\cup (X_\gamma\cap X_{\alpha+2})=\emptyset$, which implies $v_0\notin X_\gamma$.
			
			Observe that $X_\alpha\cup X_{\alpha+1}\cup X_{\alpha+2}$, $X_{\alpha+3}$ and $X_{\alpha+4}$ are pairwise disjoint, hence by (\ref{stat_2}), there is no arc among $(X_\alpha\cup X_{\alpha+1}\cup X_{\alpha+2})\setminus C$, $X_{\alpha+3}\setminus C$ and $X_{\alpha+4}\setminus C$. This indicates that $A$ must lie within one of them, and we can select $\lambda\in \{\alpha+3,\alpha+4\}$ such that $A\cap X_{\lambda}=\emptyset$.
			
			\vspace{0.1cm}
			\noindent\textbf{Case 4. $h\geq 5$}
			\vspace{0.1cm}
			
			If there are at most two pairs of $i,j\in [0,h]$ satisfying that $X_i\cap X_j\neq \emptyset$, then one can also get a desired $\lambda$ analogous to the previous arguments. We next suppose that there are distinct $\alpha,\beta$ and $\gamma$ with  $X_\alpha\cap X_{\alpha+1}\neq \emptyset$, $X_\beta\cap X_{\beta+1}\neq \emptyset$ and $X_\gamma\cap X_{\gamma+1}\neq \emptyset$. Similar to that of \textbf{Case 3}, we have $\beta\in\{\alpha-1,\alpha+1\}$ and $\gamma\in\{\beta-1,\beta+1\}$. We may assume without loss of generality that $\gamma=\alpha+2$ and $\beta=\alpha+1$. By Claim~\ref{intersect_2} $(iii)$, we obtain that  $v_0\in X_\alpha\cup X_{\alpha+1}$, $v_0\in X_{\alpha+1}\cup X_{\alpha+2}$ and $v_0\in X_{\alpha+2}\cup X_{\alpha+3}$. In addition, by Claim~\ref{intersect_2} $(i)$, we have $X_\alpha\cap X_{\alpha+2}=\emptyset$, $X_{\alpha+1}\cap X_{\alpha+3}=\emptyset$ and $X_\alpha\cap X_{\alpha+3}=\emptyset$. Thus we conclude that $v_0\in X_{\alpha+1}\cap X_{\alpha+2}$ and $v_0\notin X_\alpha,X_{\alpha+3}$.
			
			If there exists another $i\notin \{\alpha,\alpha+1,\alpha+2\}$ such that $X_i\cap X_{i+1} \neq \emptyset$, then $v_0\in X_i\cup X_{i+1}$ by Claim~\ref{intersect_2} $(iii)$. We thus obtain that $v_0\in (X_i\cup X_{i+1})\cap X_{\alpha+1}\cap X_{\alpha+2}=(X_i\cap X_{\alpha+1}\cap X_{\alpha+2})\cup (X_{i+1}\cap X_{\alpha+1}\cap X_{\alpha+2})=\emptyset$. But it is impossible.
			
			On the other hand, for any $j\in \{\alpha+4,\alpha+5,\ldots,\alpha+h\}$, one can deduce that $X_j\cap (X_{\alpha}\cup X_{\alpha+1}\cup X_{\alpha+2}\cup X_{\alpha+3})=(X_j\cap X_\alpha)\cup (X_j\cap X_{\alpha+1})\cup (X_j\cap X_{\alpha+2})\cup (X_j\cap X_{\alpha+3})=\emptyset$. It follows that $v_0\notin X_j$.
			
			Observe that $X_{\alpha}\cup X_{\alpha+1}\cup X_{\alpha+2}\cup X_{\alpha+3}, X_{\alpha+4},\ldots,X_{\alpha+h}$ are pairwise disjoint. By (\ref{stat_2}), there is no arc between every pair of $(X_\alpha\cup X_{\alpha+1}\cup X_{\alpha+2}\cup X_{\alpha+3})\setminus C$, $X_{\alpha+4}\setminus C,\ldots, X_{\alpha+h}\setminus C$. As a consequence, $A$ must lie within one of them. We can pick $\lambda\in \{\alpha+4,\alpha+5,\ldots,\alpha+h\}$ with $A\cap X_{\lambda}=\emptyset$.
		\end{proof}

		Choose such a $\lambda\in[0,h]$ guaranteed by Claim~\ref{main} and let $s\in C[\tilde{c}_{\lambda},\tilde{c}_{\lambda+1})$ with $s\tilde{c}_{\lambda+1}\in D$. Define $S=\{v\in D:\;\text{there}\;\text{exists}\;\text{an}\;(s,v)\text{-path}\;\text{in}\;D-\tilde{c}_{\lambda+1}\}$. Clearly, we have $S\subseteq X_{\lambda}$. Since $v_0\notin X_{\lambda}$, $D[S]$ satisfies that $\delta^+(D[S])\geq 1$, implying that $D[S]$ contains a cycle. Let $C'$ be an isometric cycle in $D[S]$. We shall illustrate that $C'$ is also an isometric cycle in $D$. If not, then there exist $x,y\in C'$, and an $(x,y)$-path $P$ which shares no common vertex with $C'$ such that $\|P\|<\|C'[x,y]\|$. Due to $C'$ is isometric in $D[S]$, $P$ must contain some vertex outside $S$. Let $v$ be the first vertex of $P$ outside $S$. Note that $v$ can be reached from $s$ via a path. As $v\notin S$, we deduce that $v=\tilde{c}_{\lambda+1}$. Let $\tilde{c}_{\mu}$ be the last $(b,a)$-cut-vertex along $P$ from $x$ to $y$. Then $P[\tilde{c}_{\mu},y]$ is a $(\tilde{c}_{\mu},y)$-path which avoids $\tilde{c}_{\mu+1}$. Hence, $y\in X_{\mu}$. As $y\in S\subseteq X_{\lambda}$, we get $y\in X_{\mu}\cap X_{\lambda}$. By Claim~\ref{main} (iii), we obtain that $\mu=\lambda$. Therefore, $k\leq \dist(\tilde{c}_{\lambda+1},\tilde{c}_{\lambda})\leq \|P[\tilde{c}_{\lambda+1},\tilde{c}_{\lambda}]\|\leq \|P\|<\|C'[x,y]\|$. It yields that the union of $P$ and $C'[x,y]$ forms a subdivision of $C(k,k)$, which leads to a contradiction. This shows that $C'$ is an isometric cycle in $D$.
		
		Recall that $D$ is strongly connected and $A$ is a component of $D-C$. There must be an arc $cv\in D$ with $c\in C$ and $v\in A$. If $c\in X_{\lambda}$, then $c\in C[\tilde{c}_{\lambda},\tilde{c}_{\lambda+1})$ by Claim~\ref{intersect_1}. Moreover, $v$ can be reached from $\tilde{c}_\lambda$ avoiding $\tilde{c}_{\lambda+1}$, implying that $v \in X_\lambda$. However, by Claim~\ref{main}, we deduce that $A \cap X_\lambda = \emptyset$, which is a contradiction. Consequently, we have $c\notin X_{\lambda}$, and so $c\notin C'$. This indicates that $D-C'$ has a larger component containing $A\cup \{c\}$, contradicting the maximality of $A$.

		The proof of Theorem~\ref{main theorem} is now completed.

		
		\noindent

		\section{Remarks and discussions}
		
		In this paper, we mainly prove that every digraph of minimum out-degree at least two and girth at least $4k+2$ contains a subdivision of $C(k,k)$ for each positive ingeter $k$, and a family of digraphs of minimum out-degree two and girth $k$ in which every digraph contains no subdivision of $C(k,k)$. These improve several results of Picasarri-Arrieta and Rambaud \cite{Picasarri-Arrieta}. Furthermore, Problem~\ref{girth} can be updated to the following:
		\begin{problem}\label{new}
			Find the minimum value $g\in [k+1,4k+2]$ such that mader$_{\delta^+}^{(g)}(C(k,k))\leq 2$. 
		\end{problem}

		\subsection*{Acknowledgements}
		This work was supported by the National Key Research and Development Program of China (2023YFA1010203), the National Natural Science Foundation of China (12471336, 12501473, 12401455, 12571369), and the Innovation Program for Quantum Science and Technology (2021ZD0302902).
		

		\vskip 3mm
	\end{spacing}
	
\end{document}